\definecolor{light-gray}{gray}{0.95}
\renewcommand\theequation{\thesection.\@arabic\c@equation}
\newcommand{\RR}{\mathbb{R}}
\chardef\csname pre amssym.def
\def\undefine#1{\let#1\undefined}
\def\newsymbol#1#2#3#4#5{\let\next@\relax
 \ifnum#2=\@ne\let\next@\msafam@\else
 \ifnum#2=\tw@\let\next@\msbfam@\fi\fi
 \mathchardef#1="#3\next@#4#5}
\def\mathhexbox@#1#2#3{\relax
 \ifmmode\mathpalette{}{\m@th\mathchar"#1#2#3}%
 \else\leavevmode\hbox{$\m@th\mathchar"#1#2#3$}\fi}
\def\hexnumber@#1{\ifcase#1 0\or 1\or 2\or 3\or 4\or 5\or 6\or 7\or 8\or
 9\or A\or B\or C\or D\or E\or F\fi}
\font\teneufm=eufm10 \font\seveneufm=eufm7 \font\fiveeufm=eufm5
\newcommand{\eqn}{\begin{eqnarray}}
\newcommand{\een}{\end{eqnarray}}
\newtheorem {Theorem}  {Theorem}
\numberwithin{Theorem}{section}
\newtheorem{Lemma}[Theorem]{Lemma}
\newtheorem{Definition}[Theorem]{Definition}
\newtheorem{Remark}[Theorem]{Remark}
\newcommand{\N}{{\mathbb N}}
\DeclareMathOperator{\sgn}{sgn}
\newcommand{\R}{\mathbf{R}}
\begin{document}

\title[ON A FAMILY OF REGULARIZED BENJAMIN-TYPE EQUATIONS]{Global well-posedness for a family of regularized Benjamin-type equations}

\author[I.P. Bastos]{Izabela Patr\'{\i}cio Bastos}
\address[I.P. Bastos]{Escola Politécnica, PUCPR, CEP 80215-901, Curitiba - PR, Brazil}
\email{izabela.patricio@pucpr.br}

\author[D.G. Alfaro Vigo]{Daniel G. {Alfaro Vigo}}
\address[D.G. Alfaro Vigo]{Departamento de Computa\c{c}\~ao, Instituto de Computa\c{c}\~ao, Universidade Federal do Rio de Janeiro, CP 68530, CEP 21941-590, Rio de Janeiro - RJ, Brazil}
\email{dgalfaro@ic.ufrj.br}

\author[A.Ruiz de Zarate]{Ailin Ruiz de Zarate Fabregas}
\address[A. Ruiz de Zarate]{Departamento de Matem\'atica, Universidade Federal do Paran\'a, CP 19081, CEP 81531-980, Curitiba - PR, Brazil}
\email{ailin@ufpr.br}

\author[J.  Schoeffel]{Janaina Schoeffel}
\address[J.  Schoeffel]{Setor de Educa\c{c}\~ao Profissional e Tecnol\'ogica, Universidade Federal do Paran\'a, CEP 81520-260, Curitiba - PR, Brazil}
\email{janainaschoeffel@ufpr.br}

\author[C. J. Niche]{C\'esar J. Niche}
\address[C.J. Niche]{Departamento de Matem\'atica Aplicada, Instituto de Matem\'atica. Universidade Federal do Rio de Janeiro, CEP 21941-909, Rio de Janeiro - RJ, Brazil}
\email{cniche@im.ufrj.br}

\thanks{I. P. Bastos was partially supported by Coordena\c{c}\~ao de Aperfei\c{c}oamento de Pessoal de N\'{\i}vel Superior - Brazil (CAPES) - Finance Code 001.  C.J.  Niche was partially supported by PROEX CAPES  and Bolsa PQ CNPq - 308279/2018-2, Brazil.}

\keywords{Internal waves, dispersive models, regularized Benjamin-type equations, well-posedness for PDEs, pseudodifferential operators.}

\subjclass[2020]{
76B03,
76B55,
35S10,
37L50 
}

\date{\today}

\begin{abstract}
In this work we prove local and global well-posedness results for the Cauchy problem of a family of regularized nonlinear Benjamin-type equations in both periodic and nonperiodic Sobolev spaces.
\end{abstract}

\maketitle

\begin{center}
{\it In memoriam Rafael Jos\'e I\'orio J\'unior.}
\end{center}

\section{Introduction}\label{intro}

The purpose of this work is to prove local and global well-posedness of the Cauchy problem associated to a family of regularized nonlinear Benjamin-type equations given by
\begin{equation}\label{r-benjamin-family}
\eta_t + \eta_x - \frac{3}{2} \alpha \eta \eta_x -a \eta_{xxt} - b \mathcal{L}(\eta_{xt}) = 0,
\end{equation}
where $t$ and $x$ are nondimensional temporal and horizontal space variables, respectively and $\alpha, a, b$ are positive constants. The function $\eta = \eta(x,t)$ accounts for the displacement of the interface between two fluid layers of different densities. The operator $\mathcal{L}$ can be either $\mathcal{L}= \mathcal{H}$,  the {\it Hilbert transform},  defined in the frequency domain through 
\begin{displaymath}
\widehat{\mathcal{H}f}(k) =  \mathrm{i} \sgn{(k)} \widehat{f}(k),\ \ \ \ \   k \in \mathbb{R} \ (\text{or}\ \mathbb{Z}),\ k \neq 0, 
\end{displaymath}
or $\mathcal{L}= \mathcal{T}$, the {\it Hilbert transform on the strip of height $h>0$}, which is defined through
\begin{displaymath}
\widehat{\mathcal{T}f}(k) = \mathrm{i} \coth(hk) \widehat{f}(k),\ \ \ \ \   k \in \mathbb{R} \ (\text{or}\ \mathbb{Z}),\ k \neq 0.
\end{displaymath}
Here $\widehat{g}$ denotes the nonperiodic ($k \in \mathbb{R}$) or periodic ($k \in \mathbb{Z}$) Fourier transform of $g$ with respect to $x$.

Equation~(\ref{r-benjamin-family}) describes the evolution of unidirectional internal waves at the interface of two immiscible, inviscid, incompressible and irrotational fluids limited by a rigid lid at the top, so there is no surface wave. It was obtained as a unidirectional reduction of a Boussinesq-type system in~\cite{MR4505862}, when the lower fluid layer has finite depth of value $h>0$ at rest and the bottom is flat. In such configuration we have $\mathcal{L}= \mathcal{T}$, while as $h$ tends to infinity, so the lower fluid layer has infinite depth, $\mathcal{L}= \mathcal{H}$. The third order term $a \eta_{xxt}$ acts as a regularization term, as it is the case in the Benjamin–Bona–Mahony~(BBM) equation~\cite{MR427868} if compared to KdV equation. In this regard, equation~(\ref{r-benjamin-family}) can be associated to the Benjamin equation
\begin{displaymath}
u_t+u_x+2uu_x-\alpha \mathcal{H}[u]_{xx}-\beta u_{xxx}=0,\qquad \alpha,\beta>0,
\end{displaymath}
introduced in~\cite{MR1194985} to model internal solitary waves when the interface is subjected to capillarity effects and the lower layer has infinite depth. The existence, stability and asymptotic behavior of solitary and periodic waves of permanent form for the Benjamin equation, among other properties, were studied in~\cite{MR1194985, MR1400219, MR1608077, MR1726196, MR1672020,MR2149522,MR3213486,MR4300934}. For equation (\ref{r-benjamin-family}) in the case $\mathcal{L}= \mathcal{T}$, the proof of the existence of periodic travelling wave solutions was provided in~\cite{MR4505862}.  

Concerning well-posedness results, Linares proved in~\cite{MR1674557} global well-posedness in $L^2$ for the initial value problem associated to the equation   
\begin{displaymath}
u_t+2uu_x-\alpha \mathcal{H}[u]_{xx}+u_{xxx}=0,
\end{displaymath}
for both nonperiodic and periodic cases. He also first proved local well-posedness in Sobolev spaces of $L^2$-type, denoted as $H^s(\mathbb{R})$, for $s\geq 0$. These results were improved by Kozono et al.~\cite{MR1858761}, who proved local well-posedness in $H^s(\mathbb{R})$, for $s> -3/4$; Li and Wu~\cite{MR2661345}, who proved global well-posedness for the Benjamin equation in $H^s(\mathbb{R})$, for $s > -3/4$ and Chen et al.~\cite{MR2833406}, who extended the global well-posedness in $H^s(\mathbb{R})$, for $s\geq -3/4$, as pointed out in~\cite{MR3003295} and~\cite{MR4569255}. These last two articles deal with the initial value problem associated to the Benjamin equation in weighted Sobolev spaces.

To the extent of our knowledge there are no known published analytical results concerning the local or global well-posedness for the full family of models described by equation~(\ref{r-benjamin-family}) in Sobolev spaces $H^{s}$. Pivotal results for the problem in question were developed
 in~\cite{Ref_Izabela_2019}, which are now presented here in a more compact and general way. Therefore, the main goal in this work is to provide a detailed proof of our main result which reads as follows:
\begin{Theorem}
Let $s \geq 0$ and $\phi \in H^{s}$. Then,  the nonlinear Cauchy problem associated to equation~(\ref{r-benjamin-family}), this is, 
\begin{displaymath}
\left\{
\begin{aligned}    
& \eta_t + \eta_x - \frac{3}{2} \alpha \eta \eta_x -a \eta_{xxt} - b \mathcal{L}(\eta_{xt}) = 0,\\
&\eta(0)=\phi \ \in H^s,
\end{aligned}
\right.
\end{displaymath}
is globally well-posed for $\mathcal{L}= \mathcal{H}$ and $\mathcal{L}= \mathcal{T}$, both in the nonperiodic and periodic settings.
\end{Theorem}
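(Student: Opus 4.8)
The plan is to rewrite the equation as an ordinary differential equation in $H^s$ with a locally Lipschitz right-hand side, to solve it by the contraction principle, and then to globalize by means of a conserved quantity. Set $P = I - a\,\partial_x^2 - b\,\mathcal{L}\partial_x$, whose Fourier symbol is $p(k) = 1 + a k^2 + b\,\mu(k)$, with $\mu(k) = |k|$ when $\mathcal{L} = \mathcal{H}$ and $\mu(k) = k\coth(hk)$ when $\mathcal{L} = \mathcal{T}$. In both cases $\mu(k) \ge 0$ and $p(k) \ge 1$, so $P$ is invertible and the symbol $1/p(k)$ is bounded. Applying $P^{-1}$ turns the Cauchy problem into $\eta_t = -P^{-1}\partial_x\eta + \frac{3\alpha}{2}\,P^{-1}(\eta\eta_x)$, $\eta(0) = \phi$. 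The operator $P^{-1}\partial_x$ has symbol $ik/p(k)$, bounded by $C\langle k\rangle^{-1}$; hence $A := -P^{-1}\partial_x$ is a bounded operator on every $H^s$ (in fact it gains one derivative, mapping $H^\sigma$ into $H^{\sigma+1}$) and, having purely imaginary symbol, it is skew-adjoint on each $H^s$.

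For local well-posedness I would treat the problem as the evolution equation $\eta_t = G(\eta)$, $G(\eta) = A\eta + \frac{3\alpha}{4}\,P^{-1}\partial_x(\eta^2)$, in the Banach space $H^s$. The only nontrivial point is that $G$ is locally Lipschitz, for which I would use the one-dimensional bilinear estimate $\|fg\|_{H^{s-1}} \le C\,\|f\|_{H^s}\|g\|_{H^s}$, valid for all $s \ge 0$ (for $s = 0$ through the embedding $L^1 \hookrightarrow H^{-1}$, for $s > 1/2$ through the algebra property of $H^s$, and by interpolation in between). Combined with the one-derivative gain of $P^{-1}\partial_x$, this gives $\|G(\eta) - G(\zeta)\|_{H^s} \le C(1 + \|\eta\|_{H^s} + \|\zeta\|_{H^s})\,\|\eta - \zeta\|_{H^s}$. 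The Picard--Lindelöf theorem in $H^s$ then yields a unique solution on a maximal interval $[0, T_{\max})$, with continuous dependence on $\phi$ and lifespan bounded below by a function of $\|\phi\|_{H^s}$; the same argument applies verbatim in the periodic setting and for both choices of $\mathcal{L}$.

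To globalize I would exploit the conserved quantity obtained by pairing the equation with $\eta$: the transport and nonlinear terms integrate to zero, while the dispersive terms combine into the time derivative of $\mathcal{E}(\eta) = \frac{1}{2}\langle \eta, P\eta\rangle = \frac{1}{2}\|\eta\|_{L^2}^2 + \frac{a}{2}\|\partial_x\eta\|_{L^2}^2 + \frac{b}{2}\langle \mu(D)\eta, \eta\rangle$, which is therefore constant in time. Since $\mu(D) \ge 0$, one has $\mathcal{E}(\eta) \gtrsim \|\eta\|_{H^1}^2$, so for $H^1$ data the $H^1$ norm stays bounded for all time. For $s \ge 1$ this closes the argument: differentiating $\|\eta\|_{H^s}^2$, the skew-adjoint part drops and a Kato--Ponce (Moser) estimate together with $H^1 \hookrightarrow L^\infty$ gives $\frac{d}{dt}\|\eta\|_{H^s}^2 \lesssim \|\eta\|_{L^\infty}\|\eta\|_{H^s}^2 \lesssim \|\eta\|_{H^1}\|\eta\|_{H^s}^2$, whose coefficient is controlled by $\mathcal{E}$; Gronwall's inequality then forbids blow-up and $T_{\max} = \infty$.

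The main obstacle is the range $0 \le s < 1$, where $\mathcal{E}$ need not be finite. The energy pairing is then only cubic, $\frac{d}{dt}\|\eta\|_{H^s}^2 \lesssim \|\eta\|_{H^s}^3$, and by itself does not rule out finite-time blow-up. To overcome this I would use a high--low frequency decomposition: write $\phi = \phi_{\le N} + \phi_{>N}$ with $\phi_{\le N}$ the smooth, finite-energy low-frequency part, whose solution is global and globally bounded in $H^1$ by the previous step, and set up the difference equation for the high-frequency remainder, whose initial datum is small in $H^s$. Using the uniform $L^\infty$ bound on the low-frequency solution to control the cross term, and iterating over finitely many subintervals of a given $[0,T]$, one obtains an a priori $H^s$ bound on $[0,T]$ depending only on $\|\phi\|_{H^s}$ and $T$, which forces $T_{\max} = \infty$ for every $s \ge 0$. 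Both the periodic and nonperiodic cases, and both operators $\mathcal{H}$ and $\mathcal{T}$, are covered by the same scheme, the only change being the explicit nonnegative symbol $\mu(k)$.
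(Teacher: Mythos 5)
Your reformulation ($A=-P^{-1}\partial_x$ bounded, skew-adjoint, gaining one derivative), your contraction argument for local well-posedness, and your globalization for $s\geq 1$ via the conserved quantity $\langle \eta,P\eta\rangle$ (equivalent to $\|\eta\|_{H^1}^2$) together with Gronwall's inequality are essentially identical to the paper's Theorem~\ref{teoremalocalsemigrupo}, Lemma~\ref{lema_w1}, Lemmas~\ref{normasequivalentes}--\ref{lema4} and Theorem~\ref{teoremaglobal1}. The genuine gap is in the range $0\leq s<1$: you have swapped the roles of the two frequency components relative to the paper (which follows Bona--Tzvetkov), and this swap is not cosmetic --- it breaks the argument.

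In your scheme the high-frequency remainder $w$, with datum $\phi_{>N}$ small in $H^s$, is the unknown of the interaction equation $w_t=A\bigl(w-\tfrac{3\alpha}{4}(2vw+w^2)\bigr)$, where $v$ is the low-frequency solution. But $w$ lies only in $H^s$ with $s<1$, so the conserved energy gives no control over $w$; all that is available is a Riccati-type bound
\[
\|w(t)\|_s\leq \epsilon_N + C\int_0^t\left(M_N\|w(t')\|_s+\|w(t')\|_s^2\right)dt',
\qquad \epsilon_N=\|\phi_{>N}\|_s,
\]
where $M_N$ bounds the coefficient coming from $v$ (your $L^\infty$ bound is $\|v\|_\infty\lesssim\|v\|_{H^1}$, so $M_N\approx\sup_t\|v(t)\|_{H^1}\approx\|\phi_{\leq N}\|_{H^1}$ by energy conservation). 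This bound is uniform in $t$ but \emph{not} in $N$: whenever $\phi\in H^s\setminus H^1$ one has $\|\phi_{\leq N}\|_{H^1}\to\infty$ as $N\to\infty$ (it can grow like $N^{1-s}\|\phi\|_s$). The comparison ODE then guarantees existence of $w$ only up to $t_N^*\sim M_N^{-1}\ln\left(1+M_N/\epsilon_N\right)$, and for $t_N^*>T$ one needs roughly $\epsilon_N\lesssim M_N e^{-CTM_N}$, i.e.\ an exponentially small tail. Since the tail of a general $H^s$ function decays arbitrarily slowly (e.g.\ like $1/\sqrt{\ln N}$), for such data $t_N^*\to 0$ as $N\to\infty$ and \emph{no} choice of $N$ covers a fixed interval $[0,T]$. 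Your iteration over subintervals does not repair this: the decomposition at later times is no better, and you invoke no smoothing of the high-frequency evolution, which is what a genuine Bourgain-type induction would require. The paper's assignment is the opposite one, and it is forced by this structure: the \emph{small} high-frequency datum $\eta_N^0$ is fed into the \emph{original} equation, whose local lifespan $\sim \|\mathrm{data}\|_s^{-1}$ exceeds $T$ by smallness alone and involves no coefficient from the other component; the \emph{large but smooth} low-frequency datum is fed into the interaction equation, whose unknown then lies in $H^1$, where the conserved-energy structure (Theorem~\ref{localw}, second part) yields a Gronwall-type bound and hence existence on all of $[-T,T]$ \emph{regardless of the size} of that datum. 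With the roles restored, the rest of your outline goes through.
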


To prove this, we first obtain the local well-posedness result through a standard fixed point argument. Then,  to show global well-posedness for $s \geq 1$,  we use a version of the extension principle through an appropiate conserved quantity equivalent to the $H^1$-norm.  In order to establish the result for $0 \leq s < 1$,  we decompose the initial datum in low and high frequency components and prove well-posedness of a Cauchy problem related to the evolution and interaction of these components. The aforementioned decomposition follows the strategy used in~\cite{MR2461849} in the case of the BBM equation and introduced in~\cite{MR1466164}.

When no third order term is present in equation~(\ref{r-benjamin-family}), i.e. $a = 0$, we are in the presence of the regularized intermediate long-wave~(rILW) equation in the finite depth case $\mathcal{L}= \mathcal{T}$, for which global well-posedness in $H^s$, $s>1/2$, was proven in~\cite{MR3805030},  while a deduction for a model in which the bottom is not assumed to be flat,  i.e.  there is non-trivial topography, was given in~\cite{MR2433701}.  When considering infinite depth, this is $\mathcal{L}= \mathcal{H}$, and again $a=0$, we are facing the regularized Benjamin-Ono~(rBO) equation and many analytical results describing its properties are known, see~\cite{MR1135490,MR2776905, MR3359523} and references therein. In general, the regularized equations mentioned here~(BBM, rILW, rBO), provide a unidirectional wave model of the same order of approximation than the corresponding equation~(KdV, BO, ILW), with respect to the asymptotic expansions in terms of powers of the dispersion and nonlinear parameter. These parameters appear in the bidirectional Boussinesq-type systems obtained as asymptotic reductions of the Euler equations. For such systems, there are also regularized versions of the same order of approximation, which are more tractable theoretically and numerically. All equations mentioned here are unidirectional reductions of bidirectional Boussinesq-type systems that were restricted to one propagation direction and the same advantage for the regularized equations were confirmed in~\cite{MR1135490}. That is why regularized bidirectional Boussinesq-type systems and regularized unidirectional equations appear so often in the recent literature, see for instance~\cite{MR2433701,MR2504630,MR2887985,MR3219727,MR3463169,
MR3853908,MR4414929} for internal wave models and~\cite{MR1969681,MR2128348,MR2057134,MR2251190,2007EPJST.147..113N,MR2365630,MR2921604,MR4216091} for surface wave models. For a systematic derivation of a wide class of asymptotic bidirectional models for internal waves see~\cite{Ref_BonaLannesSaut_2008,Ref_CraigGuyenneKalisch_2005}.
As in the examples mentioned before, the regularized terms in equation~(\ref{r-benjamin-family}) provides an advantage for numerical methods for the computation of approximate solutions, as was also developed in~\cite{Ref_Izabela_2019}.

This article is organized as follows. In Section~\ref{preliminary-linearproblem}  we gather definitions, set notation and prove global existence of solutions to the linearization of equation~(\ref{r-benjamin-family}). In Section~\ref{Localwellposedness} we prove local well-posedness for the Cauchy problem associated to equation~(\ref{r-benjamin-family}), while in Section~\ref{Globalwellposedness} we prove global well-posedness.

\section{Settings and linear theory results}
\label{preliminary-linearproblem}

In this section we recall some definitions used throughout this article and prove existence and uniqueness of the solution to the Cauchy problem of the linearized equation using results from the theory of semigroups of linear operators.

\subsection{Definitions and notation} 

The Fourier transform of nonperiodic functions $f:\mathbb{R}\rightarrow\mathbb{C}$ is well-defined for absolutely integrable functions as
\begin{displaymath}
\widehat{f}(k)=\frac{1}{\sqrt{2\pi}}\int_{-\infty}^{\infty}{f(x)e^{-\mathrm{i}kx}dx}, \ \ \ \ \ \forall \ k \in \mathbb{R},
\end{displaymath}
otherwise, the Fourier operator is extended to the space of tempered distributions $\mathcal{S}'$ as described in~\cite{MR1826392}. For integrable $2\pi$-periodic  functions we adopt the definition
\begin{displaymath}
\widehat{f}(k) = \frac{1}{2\pi} \int_{-\pi}^{\pi} f(x) e^{-\mathrm{i}kx} dx, \ \ \ \ \ \forall \ k \in \mathbb{Z},
\end{displaymath}
and the operator is extended to the space of periodic distributions $\mathcal{P}'$ as well.

Sobolev spaces for the nonperiodic domain case are defined as
\begin{displaymath}
H^s_{\mathbb{R}}=
\left\{f \in \mathcal{S}'(\mathbb{R});\ \widehat{f}\ \text{is measurable and}\ \int_{-\infty}^{\infty} \left( 1 + k^2 \right)^s \left|\widehat{f}(k)\right|^2 dk<\infty\right\},
\end{displaymath}
where $\mathcal{S}'(\mathbb{R})$ is the set of tempered distributions,  which are continuous linear functionals on the Schwartz space
\begin{displaymath}
\mathcal{S}(\RR)=\left\{f\in C^{\infty}(\mathbb{R});\    \sup_{x\in \R}\left|x^mf^{(n)}(x)\right|<\infty, \forall m,n \in \N\right\}.
\end{displaymath}
In $H^s_{\mathbb{R}}$ we define an inner product as
\begin{displaymath}
\left( f,g\right)_s=    \int_{-\infty}^{\infty} (1+k^2)^s \widehat{f}(k)\overline{\widehat{g}(k)}dk.
\end{displaymath}
In the periodic domain
\begin{displaymath}
H^s_{\text{per}} =
\left\{ f \in \mathcal{P}';\  \sum_{k=-\infty}^{\infty} (1+k^2)^s \left| \widehat{f}(k) \right|^2 < \infty\right\},
\end{displaymath}
where $\mathcal{P}'$ is the set of periodic distributions which are continuous linear functionals on
$\mathcal{P}=C_{\text{per}}^{\infty}(\mathbb{R})$, the set of $2\pi$-periodic infinitely smooth functions. 
The corresponding inner product is
\begin{displaymath}
\left( f,g \right)_s=    \sum_{k=-\infty}^{\infty} (1+k^2)^s \widehat{f}(k)\overline{\widehat{g}(k)}.
\end{displaymath}
For simplicity, Sobolev spaces are denoted hereafter by $H^s$ since the results proven in the sequel are valid for both nonperiodic and periodic cases.  Complete notation will be used only if it is necessary to point out some difference.

\subsection{Linearized equation results}   

The linearization of equation~(\ref{r-benjamin-family}) around the zero solution leads to the linear equation 
\begin{displaymath}
\phi_t + \phi_x -a \phi_{xxt} - b \mathcal{L}(\phi_{xt}) = 0,
\end{displaymath}
which  can be written in the frequency domain as 
\begin{equation}\label{cauchym2}
\widehat{\phi}_t(k,t)=-\mathrm{i}\varphi_j(k)\widehat{\phi}(k,t),
\end{equation}
where
\begin{displaymath}
\varphi_j(k)=\frac{k}{m_j(k)}
\end{displaymath}
and
\begin{displaymath}
m_j(k)=\left\{\begin{array}{l}
{1+b\left|k\right|+ak^2},\ \mbox{if} \ j=1,\\
{1+bk\coth(hk)+ak^2},\ \mbox{if} \ j=2,
\end{array}
\right.
\end{displaymath}
so that $j= 1$ corresponds to $\mathcal{L}=\mathcal{H}$ and $j=2$ to $\mathcal{L}=\mathcal{T}$. For $k=0$ the limit value in the expression for $m_2$ is considered. In the periodic case the domain of $m_j$ and $\varphi_j$ is restricted to $k\in \mathbb{Z}$.

We denote by $\varphi_j(D_x)$, where $D_x = -\mathrm{i} \partial_x$,  the linear operator such that 
\begin{displaymath}
\widehat{\varphi_j(D_x)\psi}(k) = \varphi_j(k)\widehat{\psi}(k),  \quad j=1, 2,  \, \, k \in \mathbb{R} \text{ or }  k \in \mathbb{Z}.
\end{displaymath}
We also write 
\begin{displaymath}
A_j\psi(x)=-\mathrm{i}\varphi_j(D_x)\psi(x), 
\end{displaymath}
where $\widehat{A_j\psi}(k)=-\mathrm{i}\varphi_j(k)\widehat{\psi}(k)$,  for $j=1, 2$. The linear operator $A_j: H^s \rightarrow H^s$ generates a unitary group denoted by $S_j(t)=e^{A_j t}$ such that $\widehat{S_j(t)}(k)=e^{-\mathrm{i}\varphi_j(k) t}$ and the linear problem associated to equation~(\ref{cauchym2}) with initial condition~$\eta_0\in H^s,\, s\in \mathbb{R}$, has a unique solution in $C^{1}(\mathbb{R}, H^s)$ of the form $\phi(t)=S_j(t)\eta_0$, as stated and proven in the following Theorem.

\begin{Theorem}\label{boacolocacaocauchylinear}
For $s\in \mathbb{R}$ and $\eta_0\in H^s$ the Cauchy problem
\begin{equation}\label{cauchylinear}
\left\{
\begin{array}{l}
\phi \in C\left(\mathbb{R},H^s\right), \\
\phi_t=A_j\phi, \\
\phi(0)=\eta_0\in H^s,
\end{array}
\right.
\end{equation}
has a unique solution $\phi\in C^1\left(\mathbb{R},H^s\right)$ of the form 
\begin{displaymath}
\phi(x,t)=S_j(t)\eta_0(x)=\left(\widehat{\eta_0}(k)e^{-\mathrm{i}\varphi_j(k)t}\right)^{\vee}(x),
\end{displaymath}
where the symbol $\vee$ denotes the inverse Fourier transform.
\end{Theorem}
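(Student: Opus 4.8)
The plan is to exploit the fact that the Fourier symbol $\varphi_j(k) = k/m_j(k)$ is \emph{real-valued and bounded}, which reduces the entire statement to elementary properties of bounded generators. The first and only substantive step is to establish the boundedness of the symbol. Since $m_1(k) = 1 + b|k| + ak^2 \geq 1$, and, using $k\coth(hk) = |k|\coth(h|k|) \geq 0$ together with the finite limit $\lim_{k\to 0} k\coth(hk) = 1/h$, also $m_2(k) \geq 1 > 0$, the multiplier $\varphi_j$ is well defined and continuous on all of $\mathbb{R}$ (hence on $\mathbb{Z}$ in the periodic case). The quadratic growth $m_j(k) \geq 1 + ak^2$ then yields the uniform bound
\begin{displaymath}
\sup_{k} |\varphi_j(k)| \leq \sup_{k} \frac{|k|}{1 + ak^2} = \frac{1}{2\sqrt{a}} =: M < \infty, \qquad j = 1, 2.
\end{displaymath}

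With this in hand, $A_j = -\mathrm{i}\varphi_j(D_x)$ is a \emph{bounded} linear operator on $H^s$ for every $s \in \mathbb{R}$: by Plancherel's identity its operator norm equals $\|\varphi_j\|_\infty \leq M$, because the weight $(1+k^2)^s$ is untouched by the multiplier. A bounded generator automatically produces a uniformly continuous one-parameter group via the exponential series $S_j(t) = e^{A_j t} = \sum_{n\geq 0} t^n A_j^n/n!$, which converges in $\mathcal{B}(H^s)$ and coincides with the Fourier multiplier of symbol $e^{-\mathrm{i}\varphi_j(k)t}$. Because $\varphi_j$ is real, $|e^{-\mathrm{i}\varphi_j(k)t}| = 1$, so Plancherel gives $\|S_j(t)\eta_0\|_s = \|\eta_0\|_s$; thus $S_j(t)$ is unitary, as asserted. (Alternatively, the reality of $\varphi_j$ makes $A_j$ skew-adjoint on the Hilbert space $H^s$, so Stone's theorem applies; but boundedness already renders this routine.)

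To finish existence, I would verify that $\phi(t) = S_j(t)\eta_0$ solves the Cauchy problem. Clearly $\phi(0) = \eta_0$. Since $A_j$ is bounded, the norm-convergent series for $S_j(t)$ may be differentiated term by term in $\mathcal{B}(H^s)$, giving $\frac{d}{dt}S_j(t) = A_j S_j(t)$; hence $\phi'(t) = A_j\phi(t)$, and continuity of $A_j$ forces $\phi' = A_j\phi \in C(\mathbb{R},H^s)$, so $\phi \in C^1(\mathbb{R},H^s)$ (indeed real-analytic in $t$). Equivalently, one differentiates the symbol $e^{-\mathrm{i}\varphi_j(k)t}$ under the Fourier transform, dominating the difference quotients by $|\varphi_j(k)| \leq M$ and invoking dominated convergence on the frequency side.

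For uniqueness, if $\phi_1,\phi_2$ are two solutions then $w = \phi_1 - \phi_2$ satisfies $w' = A_j w$ with $w(0) = 0$; computing $\frac{d}{dt}\|w\|_s^2 = 2\,\mathrm{Re}\,(A_j w, w)_s$ and noting that $(A_j w, w)_s = -\mathrm{i}\int (1+k^2)^s \varphi_j(k)|\widehat{w}(k)|^2\,dk$ is purely imaginary (because $\varphi_j$ is real), one gets $\|w(t)\|_s \equiv 0$. The single genuinely technical point is the verification of the uniform bound on $\varphi_2$, both near $k=0$ and as $|k|\to\infty$, where the $\coth$ term must be controlled; everything else is an immediate consequence of the boundedness of the generator, so the real content of the proof lies entirely in the symbol estimate of the first paragraph.
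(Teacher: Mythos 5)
Your proof is correct, and it rests on the same key observation as the paper's: the symbol $\varphi_j(k)=k/m_j(k)$ is real-valued and uniformly bounded (your bound $1/(2\sqrt{a})$, from $m_j(k)\geq 1+ak^2$, versus the paper's slightly sharper $1/(b+2\sqrt{a})$, from $1+b|k|+ak^2\geq (b+2\sqrt{a})|k|$; either suffices). Where you genuinely diverge is in how the group is produced and how uniqueness is settled. The paper verifies that $A_j$ is a bounded, densely defined operator with $A_j^{\ast}=-A_j$ and then invokes Stone's theorem and its converse to obtain the unitary group $S_j(t)$, reading off the solution formula and the conservation $\|\phi(\cdot,t)\|_s=\|\eta_0\|_s$ from the frequency-domain relation $\widehat{\phi}(k,t)=\widehat{\eta_0}(k)e^{-\mathrm{i}\varphi_j(k)t}$, with uniqueness inherited from the abstract semigroup theory. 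You instead exploit boundedness directly: the exponential series $\sum_{n\geq 0}t^nA_j^n/n!$ converges in $\mathcal{B}(H^s)$, can be differentiated term by term (so the solution is even analytic in $t$), and is identified with the multiplier $e^{-\mathrm{i}\varphi_j(k)t}$; unitarity then follows from Plancherel, and uniqueness from the explicit energy computation $\tfrac{d}{dt}\|w(t)\|_s^2=2\,\mathrm{Re}\left(A_jw,w\right)_s=0$, which uses only that $\varphi_j$ is real. Your route is more elementary and self-contained, since no appeal to Stone's theorem is needed once the generator is known to be bounded, whereas the paper's phrasing fits the semigroup framework it reuses for the nonlinear problem. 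One small point you should make explicit in the uniqueness step: any competitor $\phi\in C(\mathbb{R},H^s)$ satisfying $\phi_t=A_j\phi$ pointwise in $H^s$ is automatically in $C^1(\mathbb{R},H^s)$, because $A_j\phi\in C(\mathbb{R},H^s)$; this is what legitimizes differentiating $\|w(t)\|_s^2$ for an arbitrary solution rather than only for the one you constructed.
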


\begin{proof} By definition $A_j$ is a linear operator in its domain $D(A_j)\subseteq H^s$.  Stone  Theorem and its reciprocal allow us to show that $A_j$ generates a unitary group. First, some bounds for $m_j$ and $\varphi_j$, $j=1,2$ are necessary. Since $\left|k\right|\leq k\coth(k)$, $\forall k\in \mathbb R$, then
\begin{displaymath}
m_2(k)\geq m_1(k)=1+b\left|k\right|+ak^2\geq (b+2\sqrt{a})\left|k\right|,\forall k \in \mathbb{R}.
\end{displaymath}
Therefore,
\begin{equation}\label{limitacao2}
\left|\varphi_2(k)\right|\leq \left|\varphi_1(k) \right|\leq \frac{1}{b+2\sqrt{a}}.
\end{equation}

The inequalities involved in equation~(\ref{limitacao2}) imply that $A_j$ is densely defined, in fact $D(A_j)=H^s$ and $A_j:H^s \rightarrow H^s$ is a bounded, continuous linear operator
for both $j=1,2$. This is so because if $f \in H^s$, then
\begin{displaymath}
	\int_{-\infty}^{\infty} (1+k^2)^s\left|\varphi_j(k)\right|^2\left|\widehat{f}(k)\right|^2dk \leq \frac{1}{\left(b+2\sqrt{a}\right)^2}\left\|f\right\|_s^2,
	\end{displaymath}
so $\left\|A_j f\right\|_s \leq \left\|f\right\|_s/\left(b+2\sqrt{a}\right)$. In addition, if 
$f,g\in D(A_j)=H^s$, then
\begin{displaymath}
\left( A_j f,g\right)_s=   \int_{-\infty}^{\infty} (1+k^2)^s (-\mathrm{i}\varphi_j(k))\widehat{f}(k)\overline{\widehat{g}(k)}dk=   \int_{-\infty}^{\infty} (1+k^2)^s \widehat{f}(k)\overline{(\mathrm{i}\varphi_j(k))\widehat{g}(k)}dk,
\end{displaymath}
which is exactly $\left( f,-Ag \right)_s$ and $A_j^\ast=-A_j$. 
Similar conclusions are valid in the periodic case. 
Therefore, $A_j$ is the infinitesimal generator of the strongly continuous one-parameter  unitary group $\left\{S_j(t)=e^{-\mathrm{i}t\varphi_j(D_x)}\right\}_{t\in \mathbb{R}}$, such that $\widehat{S_j(t)}(k)=e^{-\mathrm{i}\varphi_j(k)t}$.
From system~(\ref{cauchym2})
\begin{equation}\label{linear-freq-sol}
\widehat{\phi}(k,t)=\widehat{\eta_0}(k)e^{-\mathrm{i}\varphi_j(k)t}.
\end{equation}
Consequently, $\left|\widehat{\phi}(k,t)\right|=\left|\widehat{\eta_0}(k)\right|$, 
$\forall t \in \mathbb{R}$, $k \in \mathbb{R}$ in the nonperiodic case or $k \in \mathbb{Z}$ in the periodic case, $j=1, 2$ and $\left\|\phi(\cdot,t)\right\|_s=\left\|\eta_0\right\|_s, \forall t \in \mathbb{R}$.
Also, from equation~(\ref{linear-freq-sol}) we have that
\begin{displaymath}
\phi(x,t)=\left(\widehat{\eta_0}(k)e^{-\mathrm{i}\varphi_j(k)t}\right)^{\vee}(x)=S_j(t)\eta_0(x).
\end{displaymath}
Then,  the Cauchy problem~(\ref{cauchylinear}) has a unique solution in $C^1\left(\mathbb{R},H^s\right)$ of the form $\phi(x,t)=S_j(t)\eta_0(x)$.
\end{proof}

\section{Local well-posedness}\label{Localwellposedness}

In this section we prove local well-posedness for the Cauchy problem associated to equation~(\ref{r-benjamin-family}).  

We first recall the definition of well-posedness from~\cite{MR1826392}.

\begin{Definition}\label{def:WLP}
Let $X, Y$ be two Banach spaces and $F:[-T_0,T_0]\times Y \rightarrow X$ a continuous function. The Cauchy problem 
\begin{equation}\label{eq:gen}
\left\{
\begin{aligned}
&\eta_t = F\left(t,\eta(t)\right) \ \ \in X, \\
&\eta(0)=\phi \ \ \in Y,
\end{aligned}
\right.
\end{equation}
is locally well-posed in $Y$ if
\begin{itemize}
\item[(a)] there exist $T \in (0,T_0]$ and a function $\eta \in C([-T,T];Y)$ such that $\eta(0)=\phi$ and the differential  equation is satisfied in the following sense
\begin{equation}\label{quo:inc}
\lim_{h \rightarrow 0} \left\| \frac{\eta(t+h) - \eta(t)}{h} - F\left(t,\eta(t)\right) \right\|_X = 0,
\end{equation}
where the derivatives at $t=-T$ and $t=T$ are computed from the right and the left side, respectively,
\item[(b)] the problem~(\ref{eq:gen}) has at most one solution in $C\left([-T,T];Y\right)$,
\item[(c)] the map $\phi \longmapsto \eta$ is continuous, that is, given $(\phi_n)_{n \in \mathbb{N}} \subset Y$, such that $\phi_n \stackrel{Y}{\longrightarrow} \phi^{*}$ and $\eta^{*} \in C\left([-T^{*},T^{*}];Y\right)$ the solution for the initial condition $\phi^{*}$. Then, for  $n$ sufficiently large, solutions $\eta_n$ corresponding to $\phi_n$ can be defined on the interval $[-T^{*},T^{*}]$ and the following limit holds,
\begin{displaymath}
\lim_{n \rightarrow \infty} \sup_{[-T^{*},T^{*}]} \| \eta_n(t) - \eta^{*}(t) \|_Y = 0.
\end{displaymath}
\end{itemize}
\end{Definition}

\begin{Remark}
Note that this definition includes the persistence property: $\eta(t) \in Y$, $\forall \ t \in [-T,T]$.
\end{Remark}

\subsection{Main local well-posedness result}

We shall prove the following Theorem.
\begin{Theorem}\label{teoremalocalsemigrupo}
Let $s\geq 0$, $\eta_0\in H^s$, then there exists $T=T\left(s,\left\|\eta_0\right\|_s\right)>0$ such that the nonlinear Cauchy problem
\begin{equation}
 \label{transformadaequacao}
\left\{\begin{array}{l}
\eta\in C\left([-T,T],H^s\right),\\
\eta_t=A_j\left(\eta-\frac{3}{4}\alpha \eta^2\right),\\
\eta(0)=\eta_0,
\end{array}\right.
\end{equation}
is locally well-posed. 
\end{Theorem}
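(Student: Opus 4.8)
The plan is to recast \rf{transformadaequacao} in Duhamel (integral) form and to solve it by a contraction‑mapping argument. By Theorem~\ref{boacolocacaocauchylinear} the linear flow $S_j(t)$ is a unitary group on $H^s$, so a function $\eta\in C([-T,T],H^s)$ solves \rf{transformadaequacao} if and only if it is a fixed point of
\begin{displaymath}
\Phi(\eta)(t)=S_j(t)\eta_0-\frac{3}{4}\alpha\int_0^{t}S_j(t-\tau)\,A_j\!\left(\eta^2(\tau)\right)d\tau .
\end{displaymath}
I would work in the closed ball
\begin{displaymath}
X_T^M=\left\{\eta\in C([-T,T],H^s):\ \|\eta\|_{T}:=\sup_{|t|\le T}\left\|\eta(t)\right\|_s\le M\right\},
\end{displaymath}
with $M=2\left\|\eta_0\right\|_s$, which is a complete metric space under $\|\cdot\|_T$, and fix $T$ small at the end.

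The two estimates that make the argument close are, first, the \emph{smoothing} of $A_j$: because $m_j(k)\sim a k^2$ as $|k|\to\infty$, the quantity $(1+k^2)\varphi_j(k)^2=(1+k^2)k^2/m_j(k)^2$ is bounded, whence $A_j:H^{s-1}\to H^{s}$ is bounded for every $s$, with $\left\|A_j g\right\|_s\le C_a\left\|g\right\|_{s-1}$; and second, the \emph{low‑regularity product estimate} $\left\|fg\right\|_{s-1}\le C_s\left\|f\right\|_s\left\|g\right\|_s$, valid for all $s\ge 0$ in one space dimension (for $s>1/2$ it follows from $H^s$ being an algebra, and for $0\le s\le 1/2$ from the Sobolev multiplication theorem, the condition $2s-(s-1)=s+1>1/2$ being satisfied; the periodic case is identical, working on $\mathbb{Z}$). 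Combining the two yields the crucial nonlinear bound $\left\|A_j(\eta^2)\right\|_s\le C\left\|\eta\right\|_s^2$. Using the isometry $\left\|S_j(t-\tau)\,\cdot\,\right\|_s=\left\|\,\cdot\,\right\|_s$ inside the integral then gives
\begin{displaymath}
\left\|\Phi(\eta)(t)\right\|_s\le \left\|\eta_0\right\|_s+\frac{3}{4}\alpha\,C\,T\sup_{|\tau|\le T}\left\|\eta(\tau)\right\|_s^2\le \left\|\eta_0\right\|_s+\tfrac{3}{4}\alpha C T M^2 ,
\end{displaymath}
so $\Phi$ maps $X_T^M$ into itself once $\frac{3}{4}\alpha C T M^2\le M/2$. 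For the contraction I would use $\eta^2-\zeta^2=(\eta+\zeta)(\eta-\zeta)$ together with the same two estimates to obtain $\left\|\Phi(\eta)-\Phi(\zeta)\right\|_T\le \frac{3}{4}\alpha C T\,(2M)\,\left\|\eta-\zeta\right\|_T$, a strict contraction for $T$ small. Both smallness requirements fix $T=T(s,\left\|\eta_0\right\|_s)$.

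The Banach fixed point theorem then provides a unique $\eta\in X_T^M$ solving the integral equation, giving existence (a) and uniqueness (b) of Definition~\ref{def:WLP} (uniqueness in all of $C([-T,T],H^s)$, not merely in the ball, follows from a Gronwall‑type comparison of any two solutions on a common interval). Since the right‑hand side of the integral equation is continuous in $t$ with values in $H^s$ and $A_j,\,S_j(t)$ are bounded, one differentiates to get $\eta\in C^1([-T,T],H^s)$ with $\eta_t=A_j(\eta-\frac{3}{4}\alpha\eta^2)$, so the equation holds in the sense of \rf{quo:inc}. Continuous dependence (c) follows by applying the same contraction estimates to two initial data $\phi_n,\phi^\ast$: the difference of the integral operators is controlled by $\left\|\phi_n-\phi^\ast\right\|_s$ plus a term $\frac{3}{4}\alpha C T (2M)\left\|\eta_n-\eta^\ast\right\|_T$, and absorbing the latter yields $\left\|\eta_n-\eta^\ast\right\|_T\le C'\left\|\phi_n-\phi^\ast\right\|_s\to 0$; a standard continuation covers the fixed interval $[-T^\ast,T^\ast]$. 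The main obstacle is precisely the loss of the algebra property of $H^s$ for $0\le s\le 1/2$: the whole scheme hinges on the one‑derivative gain of $A_j$—the effect of the regularizing term $a\eta_{xxt}$—exactly compensating the derivative loss $\left\|\eta^2\right\|_{s-1}\le C\left\|\eta\right\|_s^2$, so that the nonlinear term $A_j(\eta^2)$ still lands in $H^s$.
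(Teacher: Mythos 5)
Your proposal is correct and takes essentially the same route as the paper's proof: the Duhamel formulation, a contraction on the ball of radius $2\left\|\eta_0\right\|_s$ in $C([-T,T],H^s)$, the key bound $\left\|A_j(\eta^2)\right\|_s\leq C\left\|\eta\right\|_s^2$ obtained by combining the one-derivative smoothing of $A_j$ with the product estimate $\left\|fg\right\|_{s-1}\leq C\left\|f\right\|_s\left\|g\right\|_s$ (this is Lemma~\ref{lema_w1} with $r=s$), followed by Gronwall-type arguments for uniqueness and continuous dependence. The only detail to flag is the endpoint $s=0$, where the standard multiplication theorem you invoke requires $s_1+s_2>0$ strictly; the paper handles this case separately via the elementary bound $\left\|fg\right\|_{-1}\leq C\left\|f\right\|_0\left\|g\right\|_0$ (the Fourier transform of the $L^1$ function $fg$ is bounded and $(1+k^2)^{-1}$ is integrable), which is exactly what your scheme needs there.
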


To prove this result we use the Banach Fixed Point Theorem to show that there exists a solution, which is local in time, to the integral equation arising from problem~(\ref{transformadaequacao}) and for which all items in Definition~\ref{def:WLP} hold.

\subsection{Auxiliary result and proof}

We now state and prove a Lemma which will be used to prove Theorem~\ref{teoremalocalsemigrupo}.  This Lemma  will be used again in Section~\ref{Globalwellposedness}.

\begin{Lemma}\label{lema_w1}
Let $0\leq s$ and $0 \leq r \leq s+1$. Then there exists a constant $C_{s,r}>0$ such that  for any $u\in H^s$ and $v\in H^r$, $A_j (uv)\in H^r$ and 
\begin{equation}\label{eq:ineq_w1}
\|A_j (uv)\|_r\leq C_{s,r} \|u\|_s \|v\|_r.
\end{equation}
\end{Lemma}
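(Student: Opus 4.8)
The plan is to factor the bound through the smoothing action of $A_j$ together with a bilinear (multiplication) estimate. The first point is that $A_j$ gains one derivative. Since $m_j(k)\ge m_1(k)\ge 1+ak^2$ and $|k|\le(1+k^2)^{1/2}$, the symbol obeys, for $j=1,2$ and all admissible $k$,
\[
(1+k^2)^{1/2}\,|\varphi_j(k)|=\frac{(1+k^2)^{1/2}\,|k|}{m_j(k)}\le\frac{1+k^2}{1+ak^2}\le\max\{1,1/a\}=:C_0 .
\]
Hence, for any $\sigma\in\RR$ and any $w\in H^\sigma$,
\[
\|A_j w\|_{\sigma+1}^2=\int (1+k^2)^{\sigma+1}|\varphi_j(k)|^2\,|\widehat w(k)|^2\,dk\le C_0^2\,\|w\|_\sigma^2 ,
\]
so $A_j\colon H^\sigma\to H^{\sigma+1}$ is bounded (with the integral replaced by a sum in the periodic case). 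Taking $\sigma=r-1$ and $w=uv$ reduces the Lemma to the bilinear estimate
\[
\|uv\|_{r-1}\le C_{s,r}'\,\|u\|_s\,\|v\|_r,\qquad 0\le s,\ \ 0\le r\le s+1 .
\]

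To prove this estimate I pass to the frequency side, where $\widehat{uv}$ is a constant multiple of the convolution $\widehat u*\widehat v$ (a convolution integral for $k\in\RR$ and a convolution sum for $k\in\ZZ$, handled identically). Put $f(k)=(1+k^2)^{s/2}|\widehat u(k)|$ and $g(k)=(1+k^2)^{r/2}|\widehat v(k)|$, so that $\|f\|_{L^2}=\|u\|_s$ and $\|g\|_{L^2}=\|v\|_r$. Consider first the case $r\ge 1$, so the target exponent $r-1$ is nonnegative. I split the $\xi$-integration into the regions $\{|k-\xi|\ge|\xi|\}$ and its complement. On the first region $(1+k^2)^{(r-1)/2}\le C\,(1+(k-\xi)^2)^{(r-1)/2}$, and since $r-1\le s$ this weight is absorbed into $f$; the remaining convolution is controlled by Young's inequality $\|F*G\|_{L^2}\le\|F\|_{L^2}\|G\|_{L^1}$ with $\|G\|_{L^1}=\int|\widehat v|\le\|(1+k^2)^{-r/2}\|_{L^2}\,\|v\|_r$, which is finite because $r>1/2$. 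On the complementary region $(1+k^2)^{(r-1)/2}\le C\,(1+\xi^2)^{(r-1)/2}$, and the same argument applies with the integrable weight $(1+\xi^2)^{-1/2}$ carried by the $L^1$ factor. This establishes the estimate for $r\ge1$.

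For $0\le r<1$ the exponent $r-1$ is negative and the crude weight splitting fails, so I argue by duality: $\|uv\|_{r-1}=\sup\{\,|\int uv\,\overline\phi\,dx|:\ \|\phi\|_{1-r}\le 1\,\}$. Using $|\int uv\,\overline\phi\,dx|\le\|u\|_{L^2}\,\|v\phi\|_{L^2}\le\|u\|_s\,\|v\phi\|_{L^2}$ (valid since $s\ge0$), it suffices to prove $H^r\cdot H^{1-r}\hookrightarrow L^2$. The latter follows from Hölder's inequality $\|v\phi\|_{L^2}\le\|v\|_{L^p}\|\phi\|_{L^q}$ with $\tfrac1p+\tfrac1q=\tfrac12$, combined with the one-dimensional Sobolev embeddings $H^r\hookrightarrow L^p$ and $H^{1-r}\hookrightarrow L^q$; admissible exponents always exist because the constraints $r\ge\tfrac12-\tfrac1p$ and $1-r\ge\tfrac12-\tfrac1q$ add up to $1\ge\tfrac12$, with the balanced choice $p=q=4$ covering $r=\tfrac12$. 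Combining both regimes yields the bilinear estimate and hence the Lemma.

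I expect the genuinely delicate step to be this last bilinear estimate at low regularity. Because $s$ is allowed to vanish, neither the algebra property of $H^s$ nor the embedding $H^s\hookrightarrow L^\infty$ is available, and the balanced-frequency interactions near $r=\tfrac12$ sit exactly at the threshold where the one-dimensional embedding $H^{1/2}\hookrightarrow L^\infty$ fails. The way around this, used above, is to avoid $L^\infty$ entirely, relying on finite $L^p$-embeddings and on the one-derivative smoothing supplied by $A_j$; that gain is precisely what renders the whole range $0\le r\le s+1$ admissible. Once the correct Hölder and Young exponents are fixed, checking integrability of the residual frequency weights in each region is routine.
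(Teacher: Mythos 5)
Your proposal is correct, and while it shares the paper's first reduction, it replaces the paper's key citation with a self-contained harmonic-analysis argument. Both proofs begin identically: the symbol bound $|\varphi_j(k)|\lesssim (1+k^2)^{-1/2}$ shows $A_j\colon H^{r-1}\to H^r$ is bounded, reducing the Lemma to the bilinear estimate $\|uv\|_{r-1}\leq C\|u\|_s\|v\|_r$. At that point the paper simply invokes Theorem~5.1 of Behzadan--Holst (multiplication in Sobolev spaces) with exponents $(p,s_1,s_2)=(r-1,s,r)$, which covers the range where $s+r>0$, and then disposes of the remaining case $s=r=0$ by a direct Cauchy--Schwarz estimate on the convolution $\widehat{uv}$ against the weight $(1+k^2)^{-1}$. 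You instead prove the bilinear estimate from scratch, split at $r=1$ rather than at $s+r=0$: for $r\geq 1$, the frequency decomposition according to whether $|k-\xi|\geq|\xi|$, transferring the weight $(1+k^2)^{(r-1)/2}$ to the dominant frequency (using $r-1\leq s$ on one region) and closing with Young's inequality plus Cauchy--Schwarz to produce the $L^1$ factor; for $0\leq r<1$, duality $\|uv\|_{r-1}=\sup\{|\int uv\bar\phi|\colon\|\phi\|_{1-r}\leq 1\}$ reduced to $H^r\cdot H^{1-r}\hookrightarrow L^2$, which follows from H\"older and one-dimensional Sobolev embeddings with exponents chosen to dodge the failure of $H^{1/2}\hookrightarrow L^\infty$. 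Your route is longer but elementary and uniform: it needs no external multiplication theorem, it handles all boundary cases ($s=r=0$, $r=0$, $r$ near $1/2$) inside the same two-case scheme, and it transfers verbatim to the periodic setting; the paper's route is shorter and delegates the sharp exponent bookkeeping to a general published result. One small wording correction: in your second region the weight $(1+\xi^2)^{-1/2}$ is not integrable on $\mathbb{R}$, only square-integrable; what makes the $L^1$ factor finite is Cauchy--Schwarz pairing it against $g\in L^2$, exactly as in your first region where finiteness of $\|(1+k^2)^{-r/2}\|_{L^2}$ uses $r>1/2$. This is a slip of phrasing, not a gap in the argument.
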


\begin{proof} Let us first  observe that $A_j$ is a bounded operator from $H^l$ to $H^{l+1}$ for any $l\in\mathbb{R}$. Indeed,  since
\begin{displaymath}
\left|\varphi_2(k)\right| \leq \left|\varphi_1(k)\right|  = \frac{|k|}{1+b\left|k\right|+ak^2} \leq \frac{|k|}{\min\{a,1\}(1+k^2)} \leq \frac{1}{\min\{a,1\} \sqrt{1+k^2}},
\end{displaymath}
it is valid that
\begin{displaymath}
\int_{-\infty}^{\infty} (1+k^2)^{l+1} |\varphi_j(k) \hat{f}(k)|^2 dk \leq\frac{1}{\min\{a^2,1\}} \int_{-\infty}^{\infty} (1+k^2)^{l} |\varphi_j(k) \hat{f}(k)|^2 dk,
\end{displaymath}
therefore, $A_j f \in H^{l+1}$ and $\left\|A_j f \right\|_{l+1}\leq \|f\|_l/{\min\{a,1\}}$, which is also valid in the periodic case.

In order to prove that $\|uv\|_{r-1} \leq C \|u\|_s \|v\|_r$ we apply Theorem~5.1 in page 286 in~\cite{MR4339668}, which guarantees the continuity of the pointwise multiplication in Sobolev spaces. In our case, we set $p = p_1 = p_2 = 2, s = r-1, s_1 = s, s_2 = r$. Therefore, if $s+r>0$, $p\leq\min\{s,r\}$  and $p<s+r-1/2$, then there is a constant $C'_{p,s,r}>0$ such that  $\|u v\|_p \leq C'_{p,s,r} \|u\|_s \|v\|_r$. Consequently, by setting $p=r-1$ the desired result follows for  $s, r > 0$ satisfying the conditions of the Lemma.
In the periodic case, the result remains valid.

We prove now the case where $s=r=0$. Under these conditions,  
\begin{align*}
   \int_{-\infty}^{\infty} \frac{1}{1+k^2} |\widehat{uv}(k)|^2 dk & =   \int_{-\infty}^{\infty} \frac{2\pi}{1+k^2} \left| \int_{-\infty}^{\infty} \hat{u}(\tilde{k})\hat{v}(k-\tilde{k}) d\tilde{k} \right|^2 dk \\ & \leq \|u\|_{0}^2 \|v\|_{0}^2    \int_{-\infty}^{\infty} \frac{2\pi }{1+k^2} dk,
\end{align*}
therefore, $\|u v\|_{-1}^2 \leq C \|u\|_{0}^2 \|v\|_{0}^2$. The same inequality is valid in the periodic case by the absolute convergence of the series with terms of the form $1/(1+k^2)$, $k \in \mathbb{Z}$.

In conclusion, taking $C_{s,r}=\max\left\{ C'_{r-1,s,r},C\right\}/\min\{a,1\}$, the inequality~\ref{eq:ineq_w1} holds.
\end{proof}

Throughout the text, we will denote constants arising from inequalities involving the norm of certain Sobolev spaces by the letter $C$ accompanied by subscripts. Secondary constants used within the scope of a proof or part of a proof will be denoted by $K_{0},K_{1},K_{2}$ and so on. 

\subsection{Proof of the main result (local case)} 

\begin{proof}{(of Theorem~\ref{teoremalocalsemigrupo})} Since the domain of the infinitesimal generator $A_j$ is $H^s$, in order to guarantee that the right-hand side of equation~(\ref{transformadaequacao}) belongs to $H^s$ if $\eta(t) \in H^s$ for $t\in I$, where $I$ is an interval such that $0 \in I$, we apply Lemma~\ref{lema_w1} for $r=s$ to assure that $A_j\eta^2(t) \in H^s$.

The local Cauchy problem in integral form reads
\begin{equation}\label{eqintegral}
\eta(t)=S_j(t)\eta_0-\frac{3\alpha}{4}   \int_0^t S_j(t-t')A_j\eta^2(t')dt', 
\end{equation}
for $j=1,2$. Locally, the existence of solution for the integral equation~(\ref{eqintegral}) can be established by Banach Fixed Point Theorem considering the normed space 
\begin{displaymath}
X_T^s=C\left([-T,T],H^s\right), \qquad \left\|\eta\right\|_{X_T^s}=   \sup_{t\in [-T,T]}\left\|\eta(t)\right\|_s.
\end{displaymath}
The solution of problem~(\ref{cauchylinear}) belongs to $C(\mathbb{R}, H^s)$ and $\left\|S_j(t)\eta_0\right\|_s=\left\|\eta_0\right\|_s$, therefore, $S_j(\cdot)\eta_0 \in X_T^s$ and
\begin{displaymath}
\left\|S_j(\cdot)\eta_0\right\|_{X_T^s}=   \sup_{t\in [-T,T]}\left\|S_j(t)\eta_0\right\|_s=\left\|\eta_0\right\|_s, \forall T>0.
\end{displaymath}

In order to prove local well-posedness, two inequalities are necessary. First, note that the second term in equation~(\ref{eqintegral}) is bounded since
\begin{align*}
\left\|   \int_0^t S_j(t-t')A_j\eta^2(t')dt'\right\|_s & \leq\left|   \int_0^t\left\|A_j\eta^2(t')\right\|_sdt'\right| \\ & \leq C_{s,s}\left|   \int_0^t\left\|\eta(t')\right\|_s^2dt'\right|\leq C_{s,s}\left\|\eta\right\|_{X_T^s}^2\left|t\right|.
\end{align*}
Therefore,
\begin{equation}
\label{desi1}
   \sup_{t\in[-T,T]}\left\|\frac{3\alpha}{4}   \int_0^t S_j(t-t')A_j\eta^2(t')dt'\right\|_s\leq \frac{3\alpha}{4}C_{s,s}\left\|\eta\right\|_{X_T^s}^2T.
\end{equation}
Analogously, for $\eta,\nu \in X_T^s$
\begin{equation}\label{desi2}
\sup_{t\in[-T,T]}\left\|\frac{3\alpha}{4}   \int_0^t S_j(t-t')A_j\left(\eta^2(t')-\nu^2(t')\right)dt'\right\|_s
\leq 
\frac{3\alpha}{4}C_{s,s}\left\|\eta-\nu\right\|_{X_T^s}\left\|\eta+\nu\right\|_{X_T^s}T.
\end{equation}

In order to prove item~(a) in definition~\ref{def:WLP} by Banach Fixed Point Theorem let us define

\begin{displaymath}
\Lambda=\Lambda(T,M)=\left\{v\in C\left([-T,T],H^s\right);\, d(v,0)\leq M\right\},
\end{displaymath}
which we endow with the  metric $d(v,u)=\left\|v-u\right\|_{X_T^s}$.  Since $\Lambda$ is a closed subset of the complete metric space $C\left([-T,T],H^s\right)$, as verified in~\cite{MR3805030}, $\Lambda$ is also a complete metric space for all $M>0$ and $T>0$. 

Consider $M=2\left\|\eta_0\right\|_s$ and 
\begin{displaymath}
\begin{array}{llll}
J:&\Lambda & \rightarrow & \Lambda\\
&v&\mapsto &Jv: [-T,T]\rightarrow H^s,
\end{array}
\end{displaymath}
defined by 
\begin{displaymath}
Jv(t)=S_j(t)\eta_0-\frac{3\alpha}{4}   \int_0^t S_j(t-t')A_jv^2(t')dt'.
\end{displaymath}
Note that $Jv$ is continuous since if $t,r\in [-T,T]$, then
\begin{displaymath}
\begin{array}{l}
Jv(t)-Jv(r)=\left(S_j(t)-S_j(r)\right)\eta_0\\
-\frac{3\alpha}{4}\left[\left(S_j(t-r)-I\right) \int_0^t S_j(r-\tau)A_j v^2(\tau)d\tau- \int_t^{r}S_j(r-\tau)A_j v^2(\tau)d\tau\right],
\end{array}
\end{displaymath}
which tends to $0$ in $H^s$ as $t\to r$ because the integrands do not depend on $t$ and $\left\{S_j(t)\right\}_{t\in \mathbb{R}}$ is a unitary group and consequently a strongly continuous group.

Let $T'>0$ be such that if $T\leq T'$ then $J$ is well defined. In fact, if $v\in \Lambda$, then
\begin{displaymath}
\left\|Jv(t)\right\|_s\leq \left\|S_j(t)\eta_0\right\|_s+\left\|\frac{3\alpha}{4}   \int_0^tS_j(t-t')A_j v^2(t')dt'\right\|_s,
\end{displaymath}
by inequality~(\ref{desi1}), there exists a constant $C_{s,s}>0$ such that 
\begin{align*}
\left\|Jv\right\|_{X_T^s}& \leq \left\|\eta_0\right\|_s+   \sup_{t\in [-T,T]}\left\|\frac{3\alpha}{4}   \int_0^t S_j(t-t')A_j v^2(t')dt'\right\|_s\\
&\leq\left\|\eta_0\right\|_s+\frac{3\alpha}{4}C_{s,s}\left\|v\right\|^2_{X_T^s}T
\leq \frac{M}{2}\left(1+\frac{3\alpha}{2}C_{s,s}MT\right).
\end{align*}
Fixing $T'=2/(3\alpha C_{s,s} M)$, then $\left\|Jv\right\|_{X_T^s}\leq M$ for all $T\leq T'$.  We conclude that $Jv\in \Lambda$ if $v\in \Lambda$. 

In addition, for all $T<T'$, $J:\Lambda \rightarrow \Lambda$ is a contraction. In fact, if $u, v \in \Lambda$, then inequality~(\ref{desi2}) provides a constant $C_{s,s}>0$ such that
\begin{align*}
\left\|Ju-Jv\right\|_{X_T^s}&=    \sup_{t\in [-T,T]}\left\|\frac{3\alpha}{4}   \int_0^t S_j(t-t')A_j\left(u^2(t')-v^2(t')\right)dt'\right\|_s\\
&\leq\frac{3\alpha}{4}C_{s,s}\left\|u-v\right\|_{X_T^s}\left\|u+v\right\|_{X_T^s}T
\leq \frac{3\alpha}{2}C_{s,s}M\left\|u-v\right\|_{X_T^s}T,
\end{align*}
then for all $T<T'=2/(3\alpha C_{s,s} M)$ we conclude that
\begin{displaymath}
\left\|Ju-Jv\right\|_{X_T^s}\leq \frac{3\alpha}{2}C_{s,s}M\frac{T}{T'}T'\left\|u-v\right\|_{X_T^s}=q_T\left\|u-v\right\|_{X_T^s},
\end{displaymath}
where $q_T=T/T'\in [0,1)$.
Considering the positive value $T=\frac{T'}{2}$, we have a well-defined contraction mapping $J:\Lambda \rightarrow \Lambda$. By Banach Fixed Point Theorem, there exists a unique fixed point $\eta \in \Lambda \subset C\left([-T,T],H^s\right)$, which is a solution of the integral problem~(\ref{eqintegral}) and consequently, a strong solution of the original problem as defined in~\cite{MR2233925},  page $125$. 

To conclude the proof of item~(a) in Definition~\ref{def:WLP}, note that in our problem $F(\eta(t))= A_j\left(\eta(t)-\frac{3}{4}\alpha \eta^2(t)\right)$ and the corresponding expression in the limit of equation~(\ref{quo:inc}) is
\begin{equation}\label{quotient}
\left\|\frac{S_j(h)-I}{h}\eta(t)-\frac{3\alpha}{4}\frac{1}{h} \int_t^{t+h}S_j(h+t-t')A_j\eta^2(t')dt'-A_j\eta(t)+\frac{3\alpha}{4}A_j\eta^2(t)\right\|_s,
\end{equation}
which tends to zero as $h$ tends to zero. In fact, $h^{-1}(S_j(h)-I)\eta(t)\stackrel{h\to 0}{\longrightarrow} A_j\eta(t)$ and by the Mean Value Theorem there exists $\theta_h\in (0,1)$ such that for $h>0$

\begin{align*} 
&\left\|\frac{1}{h} \int_t^{t+h}S_j(h+t-t')A_j\eta^2(t')dt'-A_j\eta^2(t)\right\|_s \\
&\leq \frac{1}{h} \int_t^{t+h} \left\|S_j(h+t-t')A_j\eta^2(t')-A_j\eta^2(t)\right\|_sdt'\\
&= \left\|S_j(h-\theta_h h)A_j\eta^2(t+\theta_h h)-A_j\eta^2(t)\right\|_s,
\end{align*} 
which tends to zero as $h$ tends to zero. The same limit exists for $h<0$.  Therefore,
\begin{align*}
&\lim_{h\to 0}\left\|\frac{S_j(h)-I}{h}\eta(t)-\frac{3\alpha}{4}\frac{1}{h}     \int_t^{t+h}S_j(h+t-t')A_j\eta^2(t')dt'\right.\\
&\left. -A_j\eta(t)+\frac{3\alpha}{4}A_j\eta^2(t)\right\|_s=0,
\end{align*}
for $s\geq 0$ as required.

In order to prove uniqueness as stated in item~(b) in Definition~\ref{def:WLP}, consider $\eta_i \in C\left([-T_i,T_i],H^s\right)$, for $s\geq 0$ and $i=1,2$ such that
\begin{displaymath}
\eta_i(t)=S_j(t)\eta_0^i-\frac{3\alpha}{4}   \int_0^tS_j(t-t')A_j\eta_i^2(t')dt',
\end{displaymath}
where $\eta_0^i=\eta_i(0)$ for $i=1,2$.
Without loss of generality, we assume that $T_1\leq T_2$. For any $t\in [-T_1,T_1]$, the inequality~(\ref{desi2}) 
provides a constant $C_{s,s}>0$ such that
\begin{displaymath}
\left\|\eta_1(t)-\eta_2(t)\right\|_s
\leq\left\|\eta_0^1-\eta_0^2\right\|_s+\frac{3\alpha C_{s,s}}{4}\left|   \int_0^t \left(K_0+\left\|\eta_0^1\right\|_s+\left\|\eta_0^2\right\|_s\right) \left\|\eta_1(t')-\eta_2(t')\right\|_sdt'\right|,
\end{displaymath}
where $K_0=2\mbox{max}\left\{\left\|\eta_1(t)-\eta_0^1\right\|_{X_s^{T_1}},\left\|\eta_2(t)-\eta_0^2\right\|_{X_s^{T_1}}\right\}$. Therefore,
\begin{displaymath}
\left\|\eta_1(t)-\eta_2(t)\right\|_s \leq \left\|\eta_0^1-\eta_0^2\right\|_s+K_1\left|   \int_0^t\left\|\eta_1(t')-\eta_2(t')\right\|_s dt' \right|,
\end{displaymath}
where $K_1=\frac{3\alpha}{4}C_{s,s}\left(K_0+\left\|\eta_0^1\right\|_s+\left\|\eta_0^2\right\|_s\right)$. By Gronwall~inequality
\begin{displaymath}
\left\|\eta_1(t)-\eta_2(t)\right\|_s \leq \left\|\eta_0^1-\eta_0^2\right\|_s\exp\left(K_1\left|   \int_0^tdt'\right|\right)=\left\|\eta_0^1-\eta_0^2\right\|_s\exp\left(K_1\left|t\right|\right).
\end{displaymath}
In particular, if $\eta_0^1=\eta_0^2$ and both solutions are defined in $[-T,T]$, then $\eta_1=\eta_2$ and there is at most one solution in $C([-T,T],H^s)$, $s\geq 0$.

Finally, in order to prove item~(c) in Definition~\ref{def:WLP}, consider  $\left(\eta_n^0\right)_{n\in \mathbb{N}} \subset H^s$ such that $\eta_n^0 \to \eta_0^*$ in $H^s$ and $\eta^* \in C([-T^*,T^*], H^s)$ is the solution with initial condition $\eta_0^*$, $T^* >0$. Let $\eta_n\in C\left(\left(-T_n,T_n\right), H^s\right)$ be the solution with initial condition $\eta_0^n$ and $\overline{T_n}=   \min\left\{T_n,T^*\right\}$, $n \in \mathbb{N}$, where $T_n>0$ is such that $\left(-T_n,T_n\right)$ is the maximal symmetric interval where the solution $\eta_n$ is defined. For $t\in \left(-\overline{T_n},\overline{T_n}\right)$, it is possible to show that
\begin{align*} 
&\left\|\eta_n(t)-\eta^*(t)\right\|_s\leq \left\|\eta_0^n-\eta_0^*\right\|_s+\\
&\left|\int_0^t\left(K_2\left\|\eta_n(t')-\eta^*(t')\right\|_s+K_3\left\|\eta_n(t')-\eta^*(t')\right\|_s^2\right)dt'\right|,
\end{align*}
where $K_2=3\alpha C_{s,s}M^*/2$, $K_3=3\alpha C_{s,s}/4$, $C_{s,s}>0$ is a constant provided by inequality~(\ref{desi2}) and $M^*=   \sup_{t\in [-T^*,T^*]} \left\|\eta^*(t)\right\|_s$. 

For $t\in \left(-\overline{T_n},\overline{T_n}\right)$, let us define
\begin{displaymath}
\psi_n(t)=\left\| \eta_0^n-\eta_0^*\right\|_s+\left|   \int_0^t\left(K_2\left\|\eta_n(t')-\eta^*(t')\right\|_s+K_3\left\|\eta_n(t')-\eta^*(t')\right\|_s^2\right)dt'\right|,
\end{displaymath}
in particular, $\psi_n(0)=\left\|\eta_0^n-\eta_0^*\right\|_s$. If $t> 0$ 
\begin{displaymath}
\psi_n'(t)= K_2\left\|\eta_n(t)-\eta^*(t)\right\|_s+K_3\left\|\eta_n(t)-\eta^*(t)\right\|_s^2 
\leq K_2\psi_n(t)+K_3\psi_n^2(t),
\end{displaymath}
consequently $\frac{K_2 \psi_n'(t)}{K_2\psi_n(t)+K_3\psi_n^2(t)}\leq K_2$. Integration from zero to $t$ leads to the  inequality
\begin{equation*}
\psi_n(t)\left[\frac{K_2+K_3\left\|\eta_0^n-\eta_0^*\right\|_s\left(1-e^{K_2 t}\right)}{K_2+K_3\left\|\eta_0^n-\eta_0^*\right\|_s}\right]\leq \frac{K_2\left\|\eta_0^n-\eta_0^*\right\|_se^{K_2 t }}{K_2+K_3\left\|\eta_0^n-\eta_0^*\right\|_s},
\end{equation*}
 if $K_2+K_3\left\|\eta_0^n-\eta_0^*\right\|_s\left(1-e^{K_2 t }\right) > 0$, which is the case when
\begin{displaymath}
t <T^*_n=\frac{1}{K_2}\ln\left(\frac{K_2+K_3\left\|\eta_0^n-\eta_0^*\right\|_s}{K_3\left\|\eta_0^n-\eta_0^*\right\|_s}\right).
\end{displaymath}

The same argument applies to the case $t<0$ and we get that 
\begin{equation}
\left\|\eta_n(t)-\eta^*(t)\right\|_s\leq \psi_n(t) \leq \frac{K_2\left\|\eta_0^n-\eta_0^*\right\|_se^{K_2\left|t\right|}}{K_2+K_3\left\|\eta_0^n-\eta_0^*\right\|_s\left(1-e^{K_2\left|t\right|}\right)}
\label{eqpsin}
\end{equation}
for all $t \in \left(-\overline{T_n},\overline{T_n}\right)\cap \left(-T_n^*,T_n^*\right)$.  Since $\left\|\eta_0^n-\eta_0^*\right\|_s \stackrel{n\rightarrow \infty}{\longrightarrow}0$, choosing $n$ sufficiently large guarantees that $T^*_n>T^*$ and inequality~(\ref{eqpsin}) is valid for all $t\in \left(-\overline{T_n},\overline{T_n}\right)$. In addition, if $T_n\leq T^*$, then $\overline{T_n}=T_n$ and 
\begin{displaymath}
\left\|\eta_n(t)\right\|_s\leq \left\|\eta_n(t)-\eta^*(t)\right\|_s+\left\|\eta^*(t)\right\|_s, \forall t\in \left(-T_n,T_n\right).
\end{displaymath}
This means that $\left\|\eta_n(\cdot,t)\right\|_s$ is bounded in $\left(-T_n,T_n\right)$, which contradicts the maximality of $T_n$. Therefore, $T_n>T^*$ for sufficiently large $n$, estimate~(\ref{eqpsin}) is valid $\forall t \in \left[-T^*,T^*\right]$ and
\begin{displaymath}
   \lim_{n\to \infty} \left\|\eta_n-\eta^*\right\|_{X_{T^*}^s}=   \lim_{n\to \infty}   \sup_{t\in [-T^*,T^*]} \left\|\eta_n(t)-\eta^*(t)\right\|_s=0.
\end{displaymath}
We conclude that the map $\eta_0 \longmapsto \eta$ is continuous and from definition~\ref{def:WLP}, problem~(\ref{transformadaequacao}) is locally well-posed for $s\geq 0$.

\end{proof}

\section{Global well-posedness}\label{Globalwellposedness}

In this section the global well-posedness of problem \eqref{transformadaequacao} is proven. We recall that a  Cauchy problem is globally well-posed if the Definition~\ref{def:WLP} is satisfied for any $T>0$ with $T_0=\infty$.

\subsection{Main results} We shall prove the following Theorem.

\begin{Theorem}\label{teoremaglobaltodos}
Let $s\geq 0$ and $\eta_0 \in H^s$, then the nonlinear Cauchy problem
\begin{equation}\label{eq:CP_global}
\left\{
\begin{aligned}
&\eta_t=A_j\left(\eta-\frac{3}{4}\alpha \eta^2\right),\\
&\eta(0)=\eta_0,
\end{aligned}
\right.
\end{equation}
is globally well-posed in $H^s$, in particular there is a unique $\eta \in C\left(\mathbb R; H^s\right)$ that solves this problem.
\end{Theorem}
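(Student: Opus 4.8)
The plan is to construct the global solution on top of the local theory of Theorem~\ref{teoremalocalsemigrupo} by producing, on each bounded time interval, an a priori bound on the relevant norm that feeds a continuation argument. Since the existence time furnished by Theorem~\ref{teoremalocalsemigrupo} depends only on $s$ and $\|\eta_0\|_s$, any control of $\|\eta(t)\|_s$ that stays finite on bounded intervals rules out finite-time breakdown, and together with the uniqueness and continuous-dependence statements of Definition~\ref{def:WLP}, applied successively, upgrades local to global well-posedness. Following the Introduction, I would split the argument at $s=1$.

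For $s\ge 1$ the key is a conserved quantity equivalent to the $H^1$-norm. Writing the equation as $m_j(D_x)\eta_t=-\partial_x(\eta-\tfrac{3}{4}\alpha\eta^2)$ and pairing with $\eta$ in $L^2$, the contributions on the right vanish because $\int \partial_x\eta\,\eta\,dx=0$ and $\int \partial_x(\eta^2)\,\eta\,dx=\tfrac{2}{3}\int\partial_x(\eta^3)\,dx=0$, while the left-hand side equals $\tfrac12\frac{d}{dt}\langle m_j(D_x)\eta,\eta\rangle_0$ since $m_j(D_x)$ is self-adjoint and time independent. Hence $E(\eta)=\int m_j(k)|\widehat{\eta}(k)|^2\,dk$ is conserved, and as $c(1+k^2)\le m_j(k)\le C(1+k^2)$ for both $j=1,2$ and in both settings, $E(\eta)$ is equivalent to $\|\eta\|_1^2$. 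This yields $\sup_t\|\eta(t)\|_1\le C\,E(\eta_0)^{1/2}$, settling $s=1$ at once. For $s>1$ I would bootstrap: since $A_j^\ast=-A_j$ the linear part contributes nothing, so $\tfrac12\frac{d}{dt}\|\eta\|_s^2=-\tfrac{3}{4}\alpha\,\mathrm{Re}\,\langle A_j(\eta^2),\eta\rangle_s$, and using that $A_j$ maps $H^{s-1}$ into $H^s$ (as established in the proof of Lemma~\ref{lema_w1}), the one-dimensional embedding $H^1\hookrightarrow L^\infty$, and a Moser-type product estimate, one gets $|\langle A_j(\eta^2),\eta\rangle_s|\le C\|\eta\|_{L^\infty}\|\eta\|_s^2\le C'\|\eta\|_s^2$, the last constant being controlled by the $H^1$-bound. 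Gronwall then gives $\|\eta(t)\|_s\le\|\eta_0\|_s e^{C't}$, finite on every bounded interval, and the continuation argument yields global solutions for all $s\ge1$.

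For $0\le s<1$ the estimate above is cubic and only local, so I would follow the high--low frequency decomposition of~\cite{MR2461849,MR1466164}. Fix $T>0$ and $N>0$, and split $\eta_0=\phi_1+\phi_2$ with $\widehat{\phi_1}=\widehat{\eta_0}\,\mathbf{1}_{\{|k|\le N\}}$, so that $\phi_1\in H^\infty$ while $\|\phi_2\|_s$ is as small as desired for $N$ large. Let $u_1$ be the solution with datum $\phi_1$, which is global by the case $s\ge1$, and seek $\eta$ in the form $\eta=u_1+S_j(t)\phi_2+w$. Subtracting the equations for $\eta$, for $u_1$, and the linear evolution $S_j(t)\phi_2$, the remainder solves $\partial_t w=A_j w-\tfrac{3}{4}\alpha A_j\big(2u_1S_j(t)\phi_2+2u_1w+(S_j(t)\phi_2)^2+2S_j(t)\phi_2\,w+w^2\big)$ with $w(0)=0$. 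The structural point is that, because $A_j$ gains one derivative (Lemma~\ref{lema_w1}), every nonlinear term lands in $H^1$ although $\phi_2$ is only in $H^s$; thus $w$ should be estimated in the energy space $H^1$, where again the linear term drops out. Running the $H^1$ energy estimate for $w$ and exploiting $\|S_j(t)\phi_2\|_s=\|\phi_2\|_s$ together with the conserved $H^1$-bound for $u_1$, one controls $\|w(t)\|_1$ on short time steps; iterating these steps across $[0,T]$ in the spirit of~\cite{MR1466164} keeps $w$ bounded while $S_j(t)\phi_2$ carries the persistent, small rough part of the datum. The representation $\eta=u_1+w+S_j(t)\phi_2$ then exhibits $\eta\in C([-T,T],H^s)$, and uniqueness and continuous dependence for arbitrary $T$ follow as in the local theory combined with the difference estimate~\eqref{desi2} and the a priori bounds just obtained.

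The main obstacle is exactly the range $0\le s<1$, and within it the quantitative control of the remainder $w$. The forcing and the linear-in-$w$ coefficients in the $w$-equation involve norms of $u_1$ that grow with the cutoff: only the $H^1$-norm of $u_1$ is available from conservation, and $\|\phi_1\|_1\sim N^{1-s}\|\eta_0\|_s$, so the smallness of $\|\phi_2\|_s$ must be balanced against this growth. Making the balance work on an arbitrary interval forces the short-time iteration together with careful frequency-localized product estimates adapted from~\cite{MR2461849}; this is the technical heart of the proof. By contrast, the case $s\ge1$ is routine once the conserved quantity $E(\eta)$ and the smoothing estimate of Lemma~\ref{lema_w1} are in hand.
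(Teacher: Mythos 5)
Your treatment of $s\ge 1$ is essentially the paper's own argument: conservation of $\int m_j(k)|\widehat{\eta}(k)|^2\,dk$ (Lemmas~\ref{normasequivalentes} and~\ref{lema4}), the Moser-type bound $\|\eta^2\|_s\lesssim\|\eta\|_\infty\|\eta\|_s$, Gronwall, and an extension principle (Lemma~\ref{lemaEP}); the only thing you gloss over is the justification of the formal $L^2$-pairing, which the paper handles by approximating with smooth solutions. The genuine gap is in the range $0\le s<1$. You evolve the rough high-frequency piece $\phi_2$ \emph{linearly} and push all of its nonlinear effects into a remainder $w$ to be estimated in $H^1$, on the strength of the claim that, since $A_j$ gains one derivative, ``every nonlinear term lands in $H^1$.'' That claim fails for the self-interaction term $\bigl(S_j(t)\phi_2\bigr)^2$. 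Lemma~\ref{lema_w1} requires one factor to lie in $H^r$ with $r\le s+1$, so with both factors merely in $H^s$ it only yields $A_j\bigl((S_j(t)\phi_2)^2\bigr)\in H^s$. Even invoking the sharp multiplication theorem of~\cite{MR4339668} directly, the product of two $H^s$ functions lies in $H^p$ only for $p<2s-\tfrac12$ (and $p\le s$), so after the one-derivative gain this forcing term lies in $H^{p+1}$ with $p+1<2s+\tfrac12$; for $s\le\tfrac14$ it never reaches $H^1$. Hence the $H^1$ energy estimate for $w$ cannot even be set up on precisely the roughest part of the range the theorem must cover, and the ``short-time iteration with frequency-localized product estimates'' you defer to is not a technicality: it is a missing bilinear smoothing estimate for the Duhamel expression $\int_0^t S_j(t-t')A_j\bigl((S_j(t')\phi_2)^2\bigr)\,dt'$, which you neither formulate nor prove. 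What you describe is really Bourgain's original high-low scheme, which lives or dies by such smoothing estimates.

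The paper's decomposition is arranged exactly so that this term never appears, following~\cite{MR2461849}. The \emph{high}-frequency part $\eta_N^0$, small in $H^s$, is evolved under the \emph{full nonlinear} flow: since the local existence time in Theorem~\ref{teoremalocalsemigrupo} depends only on $\|\eta_N^0\|_s$, which tends to $0$ as $N\to\infty$, for $N$ large this nonlinear solution $v$ exists on all of $[-T,T]$. The remainder, whose datum is the band-limited (hence smooth) low-frequency part, then solves $w_t=A_j\bigl(w-\tfrac{3\alpha}{4}(2vw+w^2)\bigr)$: the dangerous $v^2$ self-interaction is internal to $v$'s own equation and never forces the $w$-equation. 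The two remaining terms are exactly those covered by Lemma~\ref{lema_w1} with $r=1\le s+1$, namely $\|A_j(vw)\|_1\lesssim\|v\|_s\|w\|_1$ and $\|A_j(w^2)\|_1\lesssim\|w\|_1^2$, and Theorem~\ref{localw} (whose proof contains the almost-conservation argument for the perturbed equation) produces $w\in C([-T,T];H^1)$; then $v+w$ solves the original problem on $[-T,T]$, with no iteration and no balancing of $N$-dependent norms. To repair your argument you would either need to prove the missing bilinear smoothing for $S_j$, or switch to this nonlinear evolution of the small high-frequency part.
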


The proof of the this Theorem will be divided into two steps. First, we address the case where $s\geq 1$ and in the sequence the case where $0\leq s <1$.  Since the techniques applied in each step are somehow different, in order to simplify the exposition we formulate the results corresponding to the first case  in the following Theorem.  

\begin{Theorem}
\label{teoremaglobal1}
Let $s \geq 1$ and $\eta_0 \in H^s$, then the nonlinear Cauchy problem 
\begin{displaymath}
\left\{
\begin{aligned}
&\eta_t=A_j\left(\eta-\frac{3}{4}\alpha \eta^2\right),\\
&\eta(0)=\eta_0,
\end{aligned}
\right.
\end{displaymath}
is globally well-posed in $H^s$.
\end{Theorem}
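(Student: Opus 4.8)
The plan is to feed the local theory of Theorem~\ref{teoremalocalsemigrupo} into an \emph{extension principle}. Because $A_j$ is bounded on $H^s$ (Theorem~\ref{boacolocacaocauchylinear}) and $\eta\mapsto A_j(\eta-\tfrac34\alpha\eta^2)$ is locally Lipschitz from $H^s$ to $H^s$ for $s\geq 1$ (Lemma~\ref{lema_w1}), the local solution is $C^1$ in time and extends to a maximal interval $(T_{\min},T_{\max})$; moreover the existence time supplied by Theorem~\ref{teoremalocalsemigrupo} depends only on $s$ and $\|\eta(t_0)\|_s$. Hence, if $\|\eta(t)\|_s$ remains bounded on every bounded subinterval, iterating the local result with a uniform time step covers all of $\mathbb{R}$, so that $T_{\max}=-T_{\min}=\infty$. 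Everything therefore reduces to an a priori bound for $\|\eta(t)\|_s$ on finite time intervals.

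First I would exhibit a conserved quantity controlling the $H^1$ norm. Writing $m_j(D_x)$ for the (real, positive) Fourier multiplier with symbol $m_j(k)$, the identity $m_j(D_x)A_j=-\partial_x$ converts the equation into $m_j(D_x)\eta_t=-\eta_x+\tfrac34\alpha(\eta^2)_x$. Setting $E_1(\eta)=\int m_j(k)\,|\widehat{\eta}(k)|^2\,dk=(m_j(D_x)\eta,\eta)_{L^2}$ and using that $m_j(D_x)$ is self-adjoint,
\begin{align*}
\frac{d}{dt}E_1(\eta)
&=2\bigl(m_j(D_x)\eta_t,\eta\bigr)_{L^2}\\
&=2\Bigl(-\eta_x+\tfrac34\alpha(\eta^2)_x,\,\eta\Bigr)_{L^2}=0 ,
\end{align*}
since $\int \eta\,\eta_x\,dx=\tfrac12\int(\eta^2)_x\,dx=0$ and $\int \eta^2\,\eta_x\,dx=\tfrac13\int(\eta^3)_x\,dx=0$ (the same computation holds in the periodic case and for both $\mathcal{L}=\mathcal{H}$ and $\mathcal{L}=\mathcal{T}$, after justifying the integrations by parts by density of smooth functions). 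As $\min\{1,a\}(1+k^2)\le m_j(k)\le C(1+k^2)$ for $j=1,2$, the functional $E_1$ is equivalent to $\|\cdot\|_1^2$, so its conservation yields $\|\eta(t)\|_1^2\le E_1(\eta_0)/\min\{1,a\}$ for all $t$. This is the required bound when $s=1$ and settles that case.

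For $s>1$ I would run an energy estimate. Since the equation holds strongly in $H^s$ and $A_j^\ast=-A_j$ on $H^s$ (so $(A_j\eta,\eta)_s=0$), one gets
\begin{displaymath}
\frac{d}{dt}\|\eta\|_s^2=2(\eta_t,\eta)_s=-\tfrac32\alpha\,(A_j\eta^2,\eta)_s\le \tfrac32\alpha\,\|A_j\eta^2\|_s\,\|\eta\|_s .
\end{displaymath}
Applying Lemma~\ref{lema_w1} with $u=\eta\in H^{s-1}$, $v=\eta\in H^s$ and $r=s$ (valid because $s\ge 1$ gives $s-1\ge 0$ and $s\le (s-1)+1$) gives $\|A_j\eta^2\|_s\le C_{s-1,s}\|\eta\|_{s-1}\|\eta\|_s$, whence $\frac{d}{dt}\|\eta\|_s^2\le C\|\eta\|_{s-1}\|\eta\|_s^2$. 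For $1\le s\le 2$ we have $\|\eta\|_{s-1}\le\|\eta\|_1$, bounded by $E_1$, so Gronwall's inequality produces a bound finite on every bounded interval; for $s>2$ one closes the argument by induction on $\lceil s\rceil$, since the global bound already obtained at level $s-1$ controls the factor $\|\eta\|_{s-1}$. Together with the extension principle, this proves global well-posedness for all $s\ge 1$.

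The step I expect to be most delicate is the rigorous justification at the exact regularity available: that the local solution is genuinely $C^1$ into $H^s$, that $E_1$ is differentiable and conserved for merely $H^1$ data, and that the integrations by parts removing the transport and nonlinear contributions hold uniformly over the four cases $\mathcal{L}\in\{\mathcal{H},\mathcal{T}\}$ and periodic/nonperiodic. The genuinely algebraic point---that the nonlinearity be controlled by $\|\eta\|_{s-1}\|\eta\|_s$ rather than by $\|\eta\|_s^3$---is exactly what the one-derivative gain encoded in $A_j$ and in Lemma~\ref{lema_w1} furnishes, so once the conserved quantity is secured the rest is careful Gronwall bookkeeping.
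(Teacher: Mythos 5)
Your proposal is correct and shares the paper's overall architecture---local theory plus an extension principle (the paper's Lemma~\ref{lemaEP}), combined with conservation of the quantity $E_1(\eta)=\int m_j(k)|\widehat\eta(k)|^2\,dk$, which is equivalent to $\|\cdot\|_1^2$ (Lemmas~\ref{normasequivalentes} and~\ref{lema4})---but the key quantitative step for $s>1$ is done by a genuinely different route. The paper never differentiates $\|\eta(t)\|_s^2$: it works with the integral equation, bounds $\|\eta(t)\|_\infty\leq C_S\|\eta_0\|_1$ for all time via Sobolev embedding and the conserved quantity, invokes the Moser-type product estimate $\|v^2\|_s\leq K_0\|v\|_\infty\|v\|_s$ (cited from Linares--Scialom), and closes with a single Gronwall argument that treats every $s\geq 1$ uniformly and yields a single-exponential bound $\|\eta(t)\|_s\leq\|\eta_0\|_s e^{K_1|t|}$. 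You instead run a differential $H^s$ energy estimate, control the nonlinear term through the paper's own bilinear smoothing Lemma~\ref{lema_w1} with the index pair $(s-1,s)$ (legitimate, since $s-1\geq 0$ and $s\leq(s-1)+1$), and then induct on regularity levels because the factor $\|\eta\|_{s-1}$ is only controlled by conservation when $s\leq 2$. What your route buys is self-containedness---no external $L^\infty$-weighted product estimate is needed, only Lemma~\ref{lema_w1}---and it is rigorous as written since the local solution is indeed $C^1$ into $H^s$ (so $\tfrac{d}{dt}\|\eta\|_s^2=2\Re(\eta_t,\eta)_s$ is justified, and antisymmetry of $A_j$ kills the linear term); what it costs is the induction on $\lceil s\rceil$ and iterated-exponential growth bounds for large $s$, where the paper gets a clean exponential bound in one stroke. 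Your formal derivation of the conservation law also matches the paper's Lemma~\ref{lema4} in substance; the paper justifies it by approximating the solution with smooth functions and passing to the limit in a duality bracket, which is precisely the ``delicate step'' you flagged, so your self-assessment of where the care is needed is accurate.
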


The proof of Theorem~\ref{teoremaglobal1} relies on the  combination of an extension principle and the existence of an appropriate conserved quantity for the local solutions (see Lemmas~\ref{lemaEP} and~\ref{lema4}). On the other hand, the proof of the remaining part of Theorem~\ref{teoremaglobaltodos} consists in the construction of solutions of the Cauchy problem using a decomposition of the initial datum in low and high-frequency components that takes advantage of the regularity of the former and the smallness of the later. Another basic ingredient of the proof is the well-posedness of a Cauchy problem related to the evolution and interaction of those components. This construction is  motivated by~ \cite{MR2461849}. The corresponding assertion is formulated in the following Theorem. 

\begin{Theorem}\label{localw}
Let $s\geq 0$, $ s+1\geq r\geq 0$,  $T_0>0$ and $u\in C\left([-T_0,T_0]; H^s\right)$. Then the nonlinear Cauchy problem
\begin{equation}\label{cauchywlocal}
\left\{\begin{aligned}
&w_t= A_j\left( w-\frac{3\alpha}{4}( u w + w^2) \right) \\
&w(0)=w_0 \in H^r,
\end{aligned}\right.
\end{equation}
is locally well-posed in $H^r$. Moreover, if $1\leq r\leq s+1$ the local solution can be extended to the whole interval $[-T_0,T_0]$, i.e. there is a unique solution $w\in C\left([-T_0,T_0]; H^{r}\right)$ of problem~\eqref{cauchywlocal}.
\end{Theorem}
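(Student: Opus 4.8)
The plan is to handle the local statement exactly as in the proof of Theorem~\ref{teoremalocalsemigrupo}, and then obtain the global extension through an energy estimate reinforced by a short bootstrap. For local well-posedness in $H^r$ I would recast~\eqref{cauchywlocal} in its Duhamel form
\[
w(t)=S_j(t)w_0-\frac{3\alpha}{4}\int_0^t S_j(t-t')A_j\!\left(u(t')w(t')+w^2(t')\right)dt',
\]
and solve it by the Banach Fixed Point Theorem in the ball of radius $2\|w_0\|_r$ of $X_T^r=C([-T,T];H^r)$. The only estimates needed are the bilinear bounds of Lemma~\ref{lema_w1}: since $0\le r\le s+1$ one has $\|A_j(uw)\|_r\le C_{s,r}\|u\|_s\|w\|_r$, and since $0\le r\le r+1$ one has $\|A_j(w^2)\|_r\le C_{r,r}\|w\|_r^2$ together with $\|A_j(w_1^2-w_2^2)\|_r\le C_{r,r}\|w_1-w_2\|_r\|w_1+w_2\|_r$. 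With $M^\ast:=\sup_{[-T_0,T_0]}\|u(t)\|_s<\infty$ these give the self-map and contraction properties for $T$ small depending on $\|w_0\|_r$ and $M^\ast$, and the three items of Definition~\ref{def:WLP} follow as in Theorem~\ref{teoremalocalsemigrupo}; the extra linear term $A_j(uw)$ is harmless.

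For the extension when $1\le r\le s+1$ I would use the continuation principle: since the local existence time depends only on $\|w(t_1)\|_r$ and on the fixed number $M^\ast$, it suffices to produce an a priori bound on $\|w(t)\|_r$ on the maximal interval of existence. I first treat $r=1$ via a conserved-type quantity. Writing $M_j$ for the self-adjoint Fourier multiplier with symbol $m_j(k)$, so that $M_jA_j=-\partial_x$ and $\mathcal{E}(w):=(M_jw,w)_0$ is equivalent to $\|w\|_1^2$ (because $\min\{1,a\}(1+k^2)\le m_j(k)\le C(1+k^2)$), I would compute for a real-valued solution
\[
\tfrac12\tfrac{d}{dt}\mathcal{E}(w)=(M_jw_t,w)_0=-\int \partial_x\!\left(w-\tfrac{3\alpha}{4}(uw+w^2)\right)w\,dx .
\]
Here $\int w_x w\,dx=0$ and $\int \partial_x(w^2)\,w\,dx=\tfrac23\int\partial_x(w^3)\,dx=0$ vanish by the divergence structure, so only $-\tfrac{3\alpha}{4}\int uww_x\,dx$ survives; by the one-dimensional embedding $H^1\hookrightarrow L^\infty$ this is bounded by $C\|u\|_0\|w\|_1^2\le C\|u\|_s\,\mathcal{E}(w)$. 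Gronwall's inequality together with $\int_{-T_0}^{T_0}\|u(t)\|_s\,dt<\infty$ then bounds $\|w(t)\|_1$ uniformly.

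To reach $r\in(1,s+1]$ I would bootstrap in increments of one in the regularity index, using the plain $H^r$ energy. Since $A_j$ is skew-adjoint, $\mathrm{Re}(A_jw,w)_r=0$ and
\[
\tfrac12\tfrac{d}{dt}\|w\|_r^2=-\tfrac{3\alpha}{4}\,\mathrm{Re}(A_j(uw),w)_r-\tfrac{3\alpha}{4}\,\mathrm{Re}(A_j(w^2),w)_r .
\]
The first term is $\le C_{s,r}\|u\|_s\|w\|_r^2$. The second term is the genuine difficulty, and is the step I expect to be the main obstacle: the naive bound $\|A_j(w^2)\|_r\le C_{r,r}\|w\|_r^2$ only yields a cubic right-hand side $C\|w\|_r^3$, which cannot be closed by Gronwall. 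The remedy is to offload one factor onto a lower, already-controlled norm: applying Lemma~\ref{lema_w1} with the exponent pair $(r-1,r)$—legitimate since $r\le(r-1)+1$—gives $\|A_j(w^2)\|_r\le C_{r-1,r}\|w\|_{r-1}\|w\|_r$, whence $\tfrac{d}{dt}\|w\|_r^2\le C\bigl(\|u\|_s+\|w\|_{r-1}\bigr)\|w\|_r^2$. Starting from the $r=1$ bound and increasing the index by at most one at each stage (so that $\|w\|_{r-1}$ is controlled from the previous step, being $\le\|w\|_1$ when $r\le 2$), a finite number of Gronwall applications bounds $\|w(t)\|_r$ on the maximal interval. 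The continuation principle then forces the solution to extend to all of $[-T_0,T_0]$, and uniqueness there follows by gluing the local uniqueness.
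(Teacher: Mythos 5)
Your proposal is correct, and up through the $H^1$ a priori bound it is essentially the paper's own proof: the contraction argument on the Duhamel formula with Lemma~\ref{lema_w1} is the same, and your energy $\mathcal{E}(w)=(M_jw,w)_0$ is exactly the paper's quantity $\interleave w \interleave_1^2$ (Lemmas~\ref{normasequivalentes} and~\ref{lema4}), with the same surviving cross term $\int u\,w\,\partial_x w\,dx$ estimated by $C_S\|u\|_0\|w\|_1\|\partial_x w\|_0$ and closed by Gronwall. The genuine difference is the passage from the $H^1$ bound to the $H^r$ bound for $1<r\le s+1$. The paper does it in one stroke: using the product estimate $\|v^2\|_r\le K_0\|v\|_\infty\|v\|_r$ (quoted from~\cite{MR2121254}) together with the embedding $\|w(t)\|_\infty\le C_S\|w(t)\|_1\le C_S K_4$, the integral inequality for $\|w(t)\|_r$ becomes linear in $\|w\|_r$, and a single Gronwall application covers every $r\ge 1$ simultaneously. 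You instead stay entirely within Lemma~\ref{lema_w1}, applying it with the exponent pair $(r-1,r)$ (admissible precisely because $r\ge 1$) to obtain $\tfrac{d}{dt}\|w\|_r^2\le C\left(\|u\|_s+\|w\|_{r-1}\right)\|w\|_r^2$, and you bootstrap in regularity increments of at most one; since $r\le s+1<\infty$, finitely many stages suffice and each stage's coefficient $\|w\|_{r-1}$ is controlled by the previous one. Both routes are valid and end with the same continuation argument (the adaptation of Lemma~\ref{lemaEP}); yours buys self-containedness, avoiding the external $L^\infty$-weighted product estimate, at the cost of an induction on the regularity index, while the paper's is shorter and uniform in $r$. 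One caveat to import from the paper: the differential identity for $\mathcal{E}(w)$ at the $H^1$ level should be justified by smooth approximation, as the paper does by routing through Lemma~\ref{lema4}; for your stages with $r>1$ this is automatic, since $w_t=A_j\left(w-\tfrac{3\alpha}{4}(uw+w^2)\right)\in C\left([-T_*,T_*];H^r\right)$ by Lemma~\ref{lema_w1}, so $t\mapsto\|w(t)\|_r^2$ is genuinely $C^1$.
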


\begin{Remark}
Theorem~\ref{teoremaglobal1} is a special case of  Theorem~\ref{localw}, where $T_0=\infty$ and $u(t)\equiv 0$. Nevertheless, we present Theorem~\ref{teoremaglobal1} as an independent result in order to simplify the exposition.
\end{Remark}

\subsection{Auxiliary results and proofs} 

In order to proof Theorems~\ref{teoremaglobal1}--~\ref{localw},   we need some auxiliary Lemmas.

\begin{Lemma}[Extension principle]\label{lemaEP}
Let $\eta \in C\left((T^-,T^+); H^s\right)$, with $s\geq 0$, be the maximal solution of the Cauchy problem described in Theorem~\ref{teoremalocalsemigrupo}. Then: 
\begin{enumerate}
\item Either $T^+ =+\infty$  or $T^+ \neq +\infty$  and  $\limsup_{t\uparrow T^+} \|\eta(t)\|_s = \infty$. 
\item Either $T^- =-\infty$  or $T^- \neq-\infty$ and $\limsup_{t\downarrow T^-} \|\eta(t)\|_s = \infty$. 
\end{enumerate} 
\end{Lemma}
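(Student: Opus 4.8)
The plan is to prove the extension principle (Lemma~\ref{lemaEP}) by establishing its contrapositive: if the solution stays bounded in $H^s$ as $t$ approaches a finite endpoint, then the solution can be continued past that endpoint, contradicting maximality. I will argue for $T^+$; the argument for $T^-$ is symmetric. So suppose, for contradiction, that $T^+ < +\infty$ and $\limsup_{t\uparrow T^+} \|\eta(t)\|_s = M < \infty$. The key observation is that Theorem~\ref{teoremalocalsemigrupo} furnishes a local existence time $T = T(s, \|\eta_0\|_s)$ that depends on the initial datum \emph{only through its $H^s$-norm}. Indeed, tracing through that proof, one sees the existence time was fixed as $T = T'/2$ with $T' = 2/(3\alpha C_{s,s} M')$ and $M' = 2\|\eta_0\|_s$; hence the lifespan is bounded below by a function that is monotone decreasing in the norm of the initial data. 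This uniformity is the crux of the whole argument.

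Concretely, first I would record the quantitative statement: there is a nonincreasing function $\tau: [0,\infty) \to (0,\infty)$ such that for any initial datum $\psi \in H^s$, the local solution of the Cauchy problem in Theorem~\ref{teoremalocalsemigrupo} exists at least on $[-\tau(\|\psi\|_s), \tau(\|\psi\|_s)]$. From the formula above one may take $\tau(\rho) = 1/(6\alpha C_{s,s}\rho)$, or any convenient lower bound. Next, since $\limsup_{t\uparrow T^+}\|\eta(t)\|_s = M$, I can choose a time $t_\ast < T^+$ with $\|\eta(t_\ast)\|_s \le M+1$ and with $t_\ast$ close enough to $T^+$ that $T^+ - t_\ast < \tau(M+1)$.

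Now comes the continuation step. I treat $\eta(t_\ast)$ as a new initial datum and invoke Theorem~\ref{teoremalocalsemigrupo} to solve the Cauchy problem on the time interval $[t_\ast - \tau(M+1), t_\ast + \tau(M+1)]$. By construction $t_\ast + \tau(M+1) > T^+$, so this produces a solution living strictly beyond $T^+$. By the uniqueness clause (item~(b) of Definition~\ref{def:WLP}, established in Theorem~\ref{teoremalocalsemigrupo}), this new solution agrees with $\eta$ on the overlap $[t_\ast, T^+)$, so gluing the two yields a solution of the original Cauchy problem on an interval properly containing $(T^-, T^+)$. This contradicts the assumed maximality of $(T^-, T^+)$, and therefore the $\limsup$ must be infinite whenever $T^+$ is finite, which is exactly the dichotomy in item~(1). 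Item~(2) follows by the identical argument applied to the left endpoint, using the time-reversal symmetry of the local theory (the group $\{S_j(t)\}$ is defined for all $t \in \mathbb{R}$).

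The main obstacle I anticipate is not the gluing itself, which is routine, but making fully rigorous the claim that the local existence time depends on the datum only through $\|\psi\|_s$. I would therefore devote care to extracting $\tau(\cdot)$ explicitly from the fixed-point argument in the proof of Theorem~\ref{teoremalocalsemigrupo}, verifying that the constant $C_{s,s}$ from Lemma~\ref{lema_w1} is independent of the particular datum (it depends only on $s$, $a$, and the multiplication estimate), so that the radius $M' = 2\|\psi\|_s$ and the resulting $T'$ genuinely depend on $\psi$ through its norm alone. A secondary technical point is checking that the glued function lies in $C^1$ and solves the equation in the strong sense of~\eqref{quo:inc} across the junction $t_\ast$; this is immediate from uniqueness, since both pieces solve the same integral equation~\eqref{eqintegral} and the integral formulation propagates regularity automatically.
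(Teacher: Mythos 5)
Your proof is correct, and it reaches the contradiction by a genuinely different mechanism than the paper's. You restart the flow at a time $t_*$ strictly before $T^+$, and the engine of your argument is the uniform lower bound on the lifespan, $\tau(\rho)=1/(6\alpha C_{s,s}\rho)$, which you correctly extract from the contraction scheme in Theorem~\ref{teoremalocalsemigrupo} (with $M'=2\|\eta_0\|_s$, $T'=2/(3\alpha C_{s,s}M')$, and $C_{s,s}$ from Lemma~\ref{lema_w1} depending only on $s$ and $a$); choosing $t_*$ with $\|\eta(t_*)\|_s\le M+1$ and $T^+-t_*<\tau(M+1)$ pushes the restarted solution past $T^+$. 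The paper instead restarts exactly at the endpoint: from the Duhamel formula~(\ref{eqintegral}), the uniform bound $M$, Lemma~\ref{lema_w1}, and the boundedness of $A_j$ (see inequality~(\ref{limitacao2})), which makes the group $S_j$ norm-continuous, it obtains $\|\eta(\tilde t)-\eta(t)\|_s\le M\|S_j(\tilde t-t)-I\|+\tfrac{3\alpha}{4}C_{s,s}M^2|\tilde t-t|$, so $\eta(t)$ converges in $H^s$ to some $\eta^+$ as $t\uparrow T^+$; it then applies the local theorem with datum $\eta^+$ at time $T^+$ and splices the two Duhamel formulas. The trade-off: the paper's route needs no quantitative information about the existence time (only its positivity), but it does need the endpoint limit to exist, which is cheap here precisely because $A_j$ is a bounded operator; your route needs no endpoint limit, but you must reopen the fixed-point proof to certify that the lifespan is bounded below by a function of the norm alone---which you rightly flag as the crux and verify. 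Your template is the more robust one in general (it survives in settings where producing the endpoint limit is awkward), while the paper's is more self-contained in that it never revisits the contraction argument. Two points worth tightening in your write-up: the uniqueness clause of Theorem~\ref{teoremalocalsemigrupo} is formulated for data prescribed at $t=0$, so you should state explicitly that the equation is autonomous and that the Gronwall-based uniqueness argument applies verbatim on intervals centered at $t_*$; and when gluing, the clean way to check that the concatenated function solves the problem on the enlarged interval is to combine the two integral equations, as the paper does, rather than appeal to ``propagation of regularity'' through the junction.
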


\begin{proof}
We present the proof for the claim related to $T^+$, the other claim may be proven in much the same way.  

If $T^+ =+\infty$, there is nothing to prove.  Let us suppose that $T^+ \neq +\infty$. To obtain a contradiction, assume that $\limsup_{t\uparrow T^+} \|\eta(t)\|_s < \infty$. Hence there exists $M>0$ such that $\sup_{0\leq t < T^+} \|\eta(t)\|_s \leq M$.  Since $\eta(t)$ satisfies equation \eqref{eqintegral}, it follows that for any $t, \tilde{t}\in[0,T^+)$
\begin{displaymath}
\eta(\tilde{t}) -\eta(t) = \left(S_j(\tilde{t}-t)-I\right) \eta(t) -\frac{3\alpha}{4} S_j(\tilde{t}-t)\int_t^{\tilde{t}} S_j(t-t') A_j \eta^2(t') dt'.
\end{displaymath}
From the above and the inequality of Lemma~\ref{lema_w1}, we have
\begin{displaymath}
\|\eta(\tilde{t}) -\eta(t) \|_s \leq M \|S_j(\tilde{t}-t)-I\| + \frac{3\alpha}{4} C_{s,s} M^2 |\tilde{t}-t|.
\end{displaymath}
Consequently, $\|\eta(\tilde{t}) -\eta(t) \|_s$ converges to zero as $\tilde{t}-t$ goes to zero, and there exists $\eta^+\in H^s$ such that $\lim_{t\uparrow T^+}\eta(t) = \eta^+$. Note that $\eta^+$ is given by 
\begin{equation}\label{eq:eta_plus}
\eta^+  = S_j(T^+)\eta_0 -\frac{3\alpha}{4} \int_{0}^{T^+} S_j(T^+-t') A_j \eta^2(t') dt'.
\end{equation}
By the local well-posedness Theorem~\ref{teoremalocalsemigrupo}, there exist $T'>0$ and a local solution $\bar\eta\in C([T^+-T',T^++T']; H^s)$ such that $\bar\eta(T^+) = \eta^+$, satisfying the integral equation
\begin{equation}\label{eq:bar_eta}
\bar\eta(t) = S_j(t-T^+)\eta^+ -\frac{3\alpha}{4} \int_{T^+}^{t} S_j(t-t') A_j \bar\eta^2(t') dt'.
\end{equation}
Let us define the function $\tilde\eta\in C((T^-,T^++T']; H^s)$ as
\begin{displaymath}
\tilde\eta(t) = 
\begin{cases}
\eta(t),&\text{for $T^-<t<T^+$},\\
\bar\eta(t),&\text{for $T^+ \leq t\leq T^++T'$}.
\end{cases}
\end{displaymath}
Combining equations \eqref{eq:eta_plus} and \eqref{eq:bar_eta} we conclude that  $\tilde\eta$ is an extension of $\eta$. This contradicts the initial assumption,  thus  $\limsup_{t\uparrow T^+} \|\eta(t)\|_s = \infty$ which establishes our claim.    
\end{proof}

We introduce now a norm, equivalent to the $H^1$ norm,  which is key in our use of the extension principle.  

\begin{Lemma}\label{normasequivalentes}
Consider the norm 

\begin{displaymath}
\interleave f \interleave_1=\left[   \int_{-\infty}^{\infty}m_j(k)\left|\widehat{f}(k)\right|^2dk\right]^\frac{1}{2}.
\end{displaymath}
Then $\left\|\cdot\right\|_1$ and $\interleave \cdot \interleave_1$ are equivalent.
\end{Lemma}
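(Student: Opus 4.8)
The plan is to reduce the claimed equivalence of the two $H^1$-type norms to a uniform two-sided comparison between the Fourier multiplier $m_j(k)$ and the standard weight $(1+k^2)$. Since both $\interleave f\interleave_1^2=\int_{-\infty}^{\infty}m_j(k)\left|\widehat f(k)\right|^2\,dk$ and $\|f\|_1^2=\int_{-\infty}^{\infty}(1+k^2)\left|\widehat f(k)\right|^2\,dk$ are weighted $L^2$ norms on the Fourier side, it suffices to exhibit positive constants $c_1,c_2$, depending only on $a,b,h$, with
\begin{equation*}
c_1\,(1+k^2)\le m_j(k)\le c_2\,(1+k^2),\qquad k\in\mathbb{R}\ (\text{or }\mathbb{Z}),\ j=1,2.
\end{equation*}
Integrating (or summing) against $\left|\widehat f(k)\right|^2$ then yields $\sqrt{c_1}\,\|f\|_1\le\interleave f\interleave_1\le\sqrt{c_2}\,\|f\|_1$, which is exactly the desired equivalence; the periodic case is identical, replacing the integral by the series over $k\in\mathbb{Z}$.

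For the lower bound I would argue as in the proof of Theorem~\ref{boacolocacaocauchylinear}: since $\coth$ is odd, the function $k\coth(hk)$ is even, and $\coth(hk)\ge 1$ for $k>0$, so $k\coth(hk)\ge|k|$ for all $k$. Hence $m_2(k)\ge m_1(k)=1+b|k|+ak^2\ge 1+ak^2\ge\min\{1,a\}\,(1+k^2)$, giving $c_1=\min\{1,a\}$ for both $j$.

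The upper bound is the only genuinely technical point, and it concerns the hyperbolic term in $m_2$. For $m_1$ one simply uses $|k|\le\frac12(1+k^2)$ to obtain $m_1(k)\le(1+\frac{b}{2}+a)(1+k^2)$. For $m_2$ I would establish the companion estimate $k\coth(hk)\le|k|+\tfrac1h$. Writing, for $k>0$, $k\bigl(\coth(hk)-1\bigr)=\frac{2k}{e^{2hk}-1}=\frac1h\cdot\frac{x}{e^{x}-1}$ with $x=2hk$, and using that $x\mapsto x/(e^{x}-1)$ is bounded by $1$ on $(0,\infty)$ (which is just $e^x\ge 1+x$), one gets $k\coth(hk)-|k|\le 1/h$ uniformly in $k$, the value at $k=0$ being the limiting value $1/h$. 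Combining this with $|k|\le\frac12(1+k^2)$ yields
\begin{equation*}
m_2(k)\le\Bigl(1+\tfrac{b}{h}\Bigr)+b|k|+ak^2\le\Bigl(1+\tfrac{b}{h}+\tfrac{b}{2}+a\Bigr)(1+k^2),
\end{equation*}
so one may take $c_2=1+\frac{b}{h}+\frac{b}{2}+a$, which also dominates the constant needed for $m_1$. With $c_1$ and $c_2$ in hand the equivalence is immediate. The main obstacle is thus the sharp control of $k\coth(hk)$ near $k=0$, where $\coth(hk)$ blows up while the product stays bounded; isolating the elementary inequality $x/(e^{x}-1)\le 1$ is what makes this transparent.
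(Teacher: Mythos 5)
Your proof is correct, and it shares with the paper the (essentially forced) reduction of the norm equivalence to a two-sided pointwise bound $K_1(1+k^2)\le m_j(k)\le K_2(1+k^2)$; but the way you establish that bound is genuinely different. The paper argues softly: it introduces $\varrho_j(k)=m_j(k)/(1+k^2)$, observes that $\lim_{|k|\to\infty}\varrho_j(k)=a>0$, so that $a/2<\varrho_j(k)<3a/2$ outside some compact interval $[-k_0,k_0]$, and then invokes continuity and positivity of $\varrho_j$ on that interval (using the limit-value convention $m_2(0)=1$, i.e. $k\coth(hk)\to 1/h$ at $k=0$) to produce constants $\widetilde{K}_1,\widetilde{K}_2$ that are not explicit. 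You instead prove the bounds by hand: the lower bound from $k\coth(hk)\ge|k|$, exactly as in the paper's proof of the linear theory theorem, and the upper bound from the estimate $k\coth(hk)\le|k|+1/h$, which you correctly reduce to $x/(e^x-1)\le 1$, i.e. $e^x\ge 1+x$; this yields explicit constants $c_1=\min\{1,a\}$ and $c_2=1+b/h+b/2+a$. Your computation is sound (the identity $k(\coth(hk)-1)=\tfrac1h\cdot\tfrac{x}{e^x-1}$ with $x=2hk$, the evenness of $k\coth(hk)$, and the value $1/h$ at $k=0$ all check out). What each approach buys: the paper's compactness argument is shorter and applies verbatim to any continuous positive symbol with the right quadratic growth at infinity, while yours is constructive and tracks the dependence of the equivalence constants on the physical parameters --- in particular the $b/h$ term exhibits how the upper constant degenerates as the depth $h\to 0$, information invisible in the paper's proof. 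Both treatments pass to the periodic case simply by restricting $k$ to $\mathbb{Z}$.
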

\begin{proof}
Let us define the functions  $ \varrho_j(k) = \tfrac{m_j(k)}{k^2+1}$, ($j=1,2$) and observe that  $\lim_{|k|\to\infty} \varrho_j(k) = a>0$. Then there is $k_0>0$ such that for $|k| > k_0$
\begin{displaymath}
\frac{a}{2} < \varrho_j(k) < \frac{3a}{2}.
\end{displaymath}
Moreover, since the functions $ \varrho_j$ are continuous and positive in $\mathbb{R}$ there exist $\widetilde{K}_1$ and $\widetilde{K}_2$ such that 
\begin{displaymath}
0< \widetilde{K}_1 \leq \varrho_j(k) \leq \widetilde{K}_2
\end{displaymath}
for any $k\in[-k_0,k_0]$ and $j=1, 2$. Setting $K_1 = \min \left\{\widetilde{K}_1, \tfrac{a}{2}\right\}$ and $K_2 = \max \left\{ \widetilde{K}_2, \tfrac{3a}{2} \right\}$, we have $K_1(1+k^2) \leq m_j(k)\leq K_2(1+k^2)$ for any $k\in\mathbb{R}$ ($j=1,2$) and the claim follows.
\end{proof} 

In the  next Lemma we show that the norm introduced in Lemma~\ref{normasequivalentes} is a conserved quantity.

\begin{Lemma}
\label{lema4}
If $\eta \in C\left([-T,T],H^s\right)$ with $s\geq 1$ satisfies the Cauchy problem described in Theorem~\ref{teoremalocalsemigrupo}, then for all $t\in [-T,T]$ 
\begin{displaymath}
\interleave \eta(t) \interleave_1=\interleave \eta_0 \interleave_1,
\end{displaymath}
where  
$\interleave f \interleave_1=\left[   \int_{-\infty}^{\infty}m(k)\left|\widehat{f}(k)\right|^2dk\right]^\frac{1}{2}$ with $m(k)=m_1(k)$ if $\mathcal{L}=\mathcal{H}$ or $m(k)=m_2(k)$ if $\mathcal{L}=\mathcal{T}$ is considered.
\end{Lemma}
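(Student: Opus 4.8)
The plan is to prove the identity by showing that the map $t\mapsto\interleave\eta(t)\interleave_1^2$ has vanishing derivative on $[-T,T]$, so that it is constant and equals its value $\interleave\eta_0\interleave_1^2$ at $t=0$. As a preliminary I would record the regularity that makes the computation legitimate when $s\ge 1$: by Lemma~\ref{lema_w1} with $r=s$ the nonlinear term $A_j\eta^2(t)$ lies in $H^s$, so the right-hand side $A_j(\eta-\tfrac34\alpha\eta^2)$ is a continuous $H^s$-valued function of $t$, whence $\eta\in C^1([-T,T],H^s)$ and $\partial_t\widehat\eta(k,t)=\widehat{\eta_t}(k,t)$. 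Moreover $\interleave\eta(t)\interleave_1<\infty$, since $\eta(t)\in H^s\subseteq H^1$ and $\interleave\cdot\interleave_1$ is equivalent to $\|\cdot\|_1$ by Lemma~\ref{normasequivalentes}.

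Next I would differentiate under the integral sign. This is justified by dominated convergence: Cauchy--Schwarz gives $\int_{-\infty}^{\infty} m_j(k)\,|\widehat{\eta_t}|\,|\widehat\eta|\,dk\lesssim\|\eta_t\|_1\|\eta\|_1$, and these norms are uniformly bounded in $t$ because $\eta,\eta_t\in C([-T,T],H^1)$. One then obtains
\[
\frac{d}{dt}\interleave\eta(t)\interleave_1^2=2\,\mathrm{Re}\int_{-\infty}^{\infty}m_j(k)\,\widehat{\eta_t}(k,t)\,\overline{\widehat\eta(k,t)}\,dk.
\]
Substituting the frequency form of the equation, $\widehat{\eta_t}=-\mathrm{i}\varphi_j(k)\big(\widehat\eta-\tfrac34\alpha\widehat{\eta^2}\big)$, and invoking the defining identity $m_j(k)\varphi_j(k)=k$, the weight $m_j$ cancels and the expression collapses to
\[
\frac{d}{dt}\interleave\eta(t)\interleave_1^2=2\,\mathrm{Re}\int_{-\infty}^{\infty}-\mathrm{i}k\Big(\widehat\eta-\tfrac34\alpha\widehat{\eta^2}\Big)\overline{\widehat\eta}\,dk.
\]
This cancellation of $m_j$ is precisely why this particular weighted norm, rather than the plain $H^1$ norm, is the conserved one.

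Finally I would treat the two terms separately. The linear contribution $-\mathrm{i}k|\widehat\eta|^2$ is purely imaginary for every $k$, so it contributes nothing to the real part. For the quadratic contribution I would write $k\widehat{\eta^2}=-\mathrm{i}\,\widehat{(\eta^2)_x}$ and apply Plancherel's identity to pass to physical space, obtaining $\int k\widehat{\eta^2}\,\overline{\widehat\eta}\,dk=-\mathrm{i}\int (\eta^2)_x\,\eta\,dx$ (using that $\eta$ is real); since $(\eta^2)_x\,\eta=\tfrac13\partial_x(\eta^3)$ is a perfect derivative, this integral vanishes, the boundary terms being zero because $\eta(t)\in H^1\hookrightarrow L^\infty$ decays at infinity in the nonperiodic case and is periodic in the periodic case. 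Both terms therefore vanish, giving $\frac{d}{dt}\interleave\eta(t)\interleave_1^2=0$ and hence $\interleave\eta(t)\interleave_1=\interleave\eta_0\interleave_1$. I expect the main obstacle to be the simultaneous and uniform justification of the differentiation under the integral and of the Plancherel/integration-by-parts step in both the periodic and nonperiodic settings; the algebra itself collapses cleanly only because $m_j\varphi_j=k$, which recasts the energy balance as the integral of a perfect $x$-derivative with no surviving flux. Equivalently, the whole argument can be phrased directly in physical space by writing the equation as $m_j(D_x)\eta_t=-\partial_x\big(\eta-\tfrac34\alpha\eta^2\big)$ and pairing with $\eta$ in $L^2$, which treats the two settings on an equal footing.
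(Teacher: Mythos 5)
Your proof is correct, but it takes a genuinely different route from the paper's. You differentiate $t\mapsto\interleave\eta(t)\interleave_1^2$ directly along the solution: Lemma~\ref{lema_w1} gives $\eta\in C^1([-T,T],H^s)$, the weighted form is bounded on $H^1\supseteq H^s$ by Lemma~\ref{normasequivalentes}, and the identity $m_j(k)\varphi_j(k)=k$ collapses the derivative to the integral of a perfect $x$-derivative, which vanishes. The paper instead avoids differentiating along the actual solution: it takes smooth approximations $\eta_n\in C^1([-T,T],H^\infty)$ converging to $\eta$ in $C^1([-T,T],H^s)$, observes that $G(\eta_n)=\partial_t\eta_n-A_j(\eta_n-\tfrac{3\alpha}{4}\eta_n^2)\to G(\eta)=0$ in $C([-T,T],H^s)$, pairs $G(\eta_n)$ against $w_n=(m\,\overline{\widehat{\eta_n}})^\vee\in H^{s-2}\hookrightarrow H^{-1}$ in the $H^{-1}$--$H^1$ duality (this is where $s\geq 1$ enters for them), integrates in time to identify $\interleave\eta_n(t)\interleave_1^2-\interleave\eta_n(0)\interleave_1^2$ with a quantity tending to zero, and passes to the limit. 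Your argument is shorter and exploits the available regularity head-on; the paper's regularize-compute-pass-to-the-limit scheme is the more robust template (it survives in situations where the solution is not known to be $C^1$ into a space on which the conserved quadratic form is bounded), at the cost of needing the approximating sequence and the continuity of $G$. Two small points in your write-up deserve repair: (i) ``dominated convergence'' is not quite the right justification for differentiating under the integral --- what your Cauchy--Schwarz bound actually establishes is that $B(f,g)=\int m_j\widehat{f}\,\overline{\widehat{g}}\,dk$ is a bounded Hermitian form on $H^1\times H^1$, so that $t\mapsto B(\eta(t),\eta(t))$ is differentiable with derivative $2\,\mathrm{Re}\,B(\eta_t,\eta)$ because $t\mapsto\eta(t)$ is $C^1$ into $H^1$; phrase it that way and the step is airtight in both settings; (ii) the constant is off: $(\eta^2)_x\,\eta=\tfrac{2}{3}\partial_x(\eta^3)$, not $\tfrac{1}{3}\partial_x(\eta^3)$ --- harmless, since all that matters is that it is an exact derivative. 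Note also that both your integration by parts and the paper's (their computation of $\langle w_n, A_j(\eta_n-\tfrac{3\alpha}{4}\eta_n^2)\rangle_1$) implicitly use that $\eta$ is real-valued; it would be worth saying so explicitly.
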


\begin{Remark}
We have a similar result for the periodic case by considering 

\begin{displaymath}
\interleave f \interleave_1=\left[   \sum_{k=-\infty}^{\infty}m(k)\left|\widehat{f}(k)\right|^2\right]^\frac{1}{2}. 
\end{displaymath}
\end{Remark}
\begin{proof}{(of Lemma~\ref{lema4})}
Since $\eta$ satisfies the Cauchy problem described in Theorem~\ref{teoremalocalsemigrupo} in the distributional sense for $\eta \in C\left([-T,T],H^s\right)$, by means of Lemma~\ref{lema_w1} when $r=s$, 
it follows that
\begin{displaymath}
\eta_t=A_j\left(\eta-\frac{3}{4}\alpha\eta^2\right) \in C\left(\left[-T,T\right],H^s\right).
\end{displaymath}
Therefore, $\eta \in C^1\left(\left[-T,T\right],H^s\right)$. Consider the sequence
$\left(\eta_n\right)_{n\in\mathbb N} \subset C^1\left(\left[-T,T\right],H^\infty\right)$,
converging to the solution $\eta\in C^1\left(\left[-T,T\right],H^s\right)$. 
Define the nonlinear operator 
\begin{displaymath}
G: C^1\left(\left[-T,T\right],H^s\right) \to C\left(\left[-T,T\right],H^{s}\right),
\end{displaymath}
such that $G(v)=v_t-A_j\left(v-\tfrac{3\alpha}{4}v^2\right)$. Operator $G$ is continuous,  therefore
\begin{equation}
\lim_{n \to \infty} G\left(\eta_n\right)=G(\eta)=0,
\label{eq11}
\end{equation}
in $C\left(\left[-T,T\right],H^{s}\right)$. 

Set $w_n = \left( m(\cdot) \overline{\hat{\eta}_n}\right)^\vee$, then $w_n\in C^1\left(\left[-T,T\right],H^{s-2}\right)$.  Since $s\geq 1$, then $s-2\geq -1$, therefore we have the continuous embeddings $H^s \hookrightarrow H^1$ and $H^{s-2} \hookrightarrow H^{-1}$.  Since $H^{-1}$ is the dual of $H^1$, the limit provided by equation~\eqref{eq11} when $n \to \infty$ implies the decay of the following duality bracket:
\begin{equation}
\left|\left\langle w_n(t), G\left(\eta_n(t)\right)\right\rangle_{1}\right| \leq \left\|w_n\right\|_{-1} \left\|G\left(\eta_n\right)\right\|_{1}\leq \left\|w_n\right\|_{s-2}\left\|G\left(\eta_n\right)\right\|_{s} \to 0.
\label{eq22}
\end{equation}

From the real part (denoted by $\Re$), of $\left\langle w_n(t), G\left(\eta_n(t)\right)\right\rangle_{1}$, applying Parseval identity and using that $m$ is a real-valued function,  we get
\begin{equation}
\begin{aligned}\label{eq:re_w_eta_t}
\Re\left\langle w_n,  \partial_t\eta_{n} \right\rangle_1 & = \frac{1}{2}\left[\left\langle w_n,  \partial_t\eta_{n} \right\rangle_1 + \overline{\left\langle w_n,  \partial_t\eta_{n} \right\rangle_1}\right] \\
&=\frac{1}{2}\int_{-\infty}^{\infty} \left[ m(k) \overline{\widehat{\eta_n}}(k) \partial_t\widehat{\eta_n}(k) + \overline{m(k) \overline{\widehat{\eta_n}}(k) \partial_t\widehat{\eta_n}(k)}  \right] dk \\
&=\frac{1}{2}\int_{-\infty}^{\infty}  m(k) \left[ \overline{\widehat{\eta_n}}(k) \partial_t\widehat{\eta_n}(k) +  \widehat{\eta_n}(k) \partial_t\overline{\widehat{\eta_n}}(k) \right] dk \\
&=\frac{1}{2}\int_{-\infty}^{\infty}  m(k) \partial_t \left| \widehat{\eta_n}(k) \right|^2 dk \\
&=\frac{1}{2}\frac{d}{dt}  \interleave \eta_n \interleave_1^2.
\end{aligned}
\end{equation}
Also, by Parseval identity

\begin{equation}
\begin{aligned}\label{eq:w_Aj}
\left\langle w_n, A_j\left( \eta_n  -\frac{3\alpha}{4} \eta_n^2\right)\right\rangle_1 & = \int_{-\infty}^{\infty} m(k) \overline{\widehat{\eta_n}}(k) \left[-\mathbf{i}\varphi_j(k)\widehat{\left(\eta_n -\frac{3\alpha}{4} \eta_n^2\right)} (k)\right] dk \\
& = -\int_{-\infty}^{\infty} \overline{\widehat{\eta_n}}(k) \left[\mathbf{i}k \widehat{\left(\eta_n -\frac{3\alpha}{4} \eta_n^2\right)} (k)\right] dk \\
& = -\int_{-\infty}^{\infty}\eta_n(x)\partial_x\left( \eta_n(x) -\frac{3\alpha}{4} \eta_n^2(x)\right) dx \\
& =  -\int_{-\infty}^{\infty}\partial_x\left(\frac{1}{2}\eta^2_n(x) -\frac{\alpha}{4} \eta_n^3(x) \right)dx = 0.
\end{aligned}
\end{equation}
By integrating $\langle w_n, G\left(\eta_n\right)\rangle_1$  from $0$ to $t$ (with $t\in \left[-T,T\right]$) and using equations \eqref{eq:re_w_eta_t} and \eqref{eq:w_Aj}, we get
\begin{equation}
2\Re \int_0^t\left\langle w_n(t'), G\left(\eta_n(t')\right)\right\rangle_1 dt'  = \interleave \eta_n(t) \interleave_1^2-\interleave \eta_n(0) \interleave_1^2.
\label{eq33}
\end{equation}
The same results are valid in the periodic case. Finally,  taking limit in \eqref{eq33} as $n\to\infty$ and using equation \eqref{eq22} it follows that $\interleave \eta(t) \interleave_1=\interleave \eta_0\interleave_1$.
\end{proof}

We have now obtained all the necessary results to prove global existence,  i.e.  Theorem~\ref{teoremaglobal1}. 

\subsection{Proof of the main result} 

\begin{proof}{(of Theorem~\ref{teoremaglobal1})} Take $s\geq 1$ and let $\eta\in C((T^-,T^+); H^s)$ represents the maximal solution of the Cauchy problem \eqref{eq:CP_global} in $H^s$, where $T^{\pm} = T^{\pm}(\eta_0)$. We claim that  $T^{\pm} = \pm\infty$. 

Since the solution satisfies the integral equation

\begin{displaymath}
\eta(t)=S_j(t)\eta_0-\frac{3\alpha}{4}   \int_0^t S_j(t-t')A_j\eta^2(t')dt',
\end{displaymath}
for any $t\in(T^-,T^+)$, using that $A_j$ is a linear bounded operator (see inequality~\eqref{limitacao2}) it follows that
\begin{equation}
\begin{aligned}\label{eq:ineq_norm}
\left\|\eta(t)\right\|_s &\leq \left\|\eta_0\right\|_s+\frac{3\alpha}{4}\left|   \int_0^t \left\|A_j\eta^2(t')\right\|_sdt'\right| \\
& \leq \left\|\eta_0\right\|_s+ \frac{3\alpha}{4(b+2\sqrt{a})} \left| \int_0^t \left\|\eta^2(t')\right\|_s dt'\right|.
\end{aligned}
\end{equation}
As a consequence of Lemma~\ref{lema4} and Sobolev Lemma (see for instance~\cite{MR1826392}), there is a constant $C_{S}$ such that  
\begin{equation} \label{eq:ineq_inf}
\|\eta(t)\|_\infty\leq C_{S} \, \|\eta_0\|_1
\end{equation}  
for any $t\in(T^-,T^+)$. Moreover, using  inequalities \eqref{eq:ineq_norm}, \eqref{eq:ineq_inf}, and recalling that there is a constant $K_0$ such that $\|v^2\|_s\leq K_0\|v\|_\infty \|v\|_s$ for any $v\in L^\infty\cup H^s$ (see for instance~\cite{MR2121254}), we obtain

\begin{align*}
\left\|\eta(t)\right\|_s &  \leq \left\|\eta_0\right\|_s+ \frac{3\alpha K_0}{4(b+2\sqrt{a})} \left| \int_0^t \left\|\eta(t')\right\|_\infty \left\|\eta(t')\right\|_s dt'\right|\\
& \leq \left\|\eta_0\right\|_s+ \frac{3\alpha C_{S} K_0 \left\|\eta_0\right\|_1}{4(b+2\sqrt{a})} \left| \int_0^t  \left\|\eta(t')\right\|_s dt'\right|.
\end{align*}
From this, using Gronwall Lemma, we deduce that for any $t\in(T^-,T^+)$
\begin{displaymath}
\left\|\eta(t)\right\|_s \leq \|\eta_0 \|_s e^{K_1 |t|},
\end{displaymath}
where $K_1 = \tfrac{3\alpha C_{S} K_0 }{4(b+2\sqrt{a})} \| \eta_0 \|_1$. 

Finally, using Lemma~\ref{lemaEP} we can assert that $T^\pm = \pm\infty$. Indeed, if $T^+$ were finite, we would have $\limsup_{t\to T^+} \left\|\eta(t)\right\|_s  \leq \|\eta_0 \|_s e^{K_1 T^+}<\infty$ which contradicts the conclusions of that Lemma. Similar considerations apply to $T^-$.
\end{proof}

We proceed to prove the global well-posedness of the Cauchy problem \eqref{eq:CP_global} for the case where $0\leq s < 1$.

\begin{proof}{(of Theorem~\ref{teoremaglobaltodos})} Let $\eta_0\in H^s$ and fix an arbitrary $T>0$. For $N>0$ define 
\begin{displaymath}
\eta_N^0(x) =  \frac{1}{\sqrt{2\pi}}   \int_{\left|k\right|\geq N}e^{\mathrm{i} kx}\widehat{\eta_0}(k)dk
\end{displaymath}
in the nonperiodic case or
\begin{displaymath}
\eta_N^0(x) =  \sum_{\left|k\right|> N}e^{\mathrm{i} kx}\widehat{\eta_0}(k)
\end{displaymath}
in the periodic case.
Observe that $\eta_N^0\in H^s$ and $\lim_{N\to\infty} \|\eta_N^0\|_s = 0$. Hence, from Theorem~\ref{teoremalocalsemigrupo},  we can take $N$ sufficiently large such that the maximal solution in $H^s$ of the Cauchy problem
\begin{equation}\label{eq:CP_global_v}
\left\{
\begin{aligned}
&v_t=A_j\left(v-\frac{3}{4}\alpha v^2\right),\\
&v(0)=\eta_N^0,
\end{aligned}
\right.
\end{equation}
 is defined for any $t\in[-T,T]$. Let $v\in C([-T,T]; H^s)$ represents this solution.

Consider the Cauchy problem
\begin{equation}\label{eq:CP_aux}
\left\{
\begin{aligned}
& w_t=A_j\left(w-\frac{3\alpha}{4}( 2 v w + w^2)\right),\\
& w(0)=w_0,
\end{aligned}
\right.
\end{equation}
where $w_0=\eta_0-\eta_N^0$. Since its expression is given by
\begin{displaymath}
w_0(x) = \frac{1}{\sqrt{2\pi}}   \int_{-N}^N e^{\mathrm{i} kx}\widehat{\eta_0}(k)dk,
\end{displaymath}
or
\begin{displaymath}
w_0(x) = \sum_{k=-N}^N e^{\mathrm{i} kx}\widehat{\eta_0}(k)
\end{displaymath}
in the periodic case, it follows that $w_0\in H^r$ for any real $r$. In particular, taking $r=1$ and $u=2v$, Theorem~\ref{localw} guarantees the existence and uniqueness of a solution $w\in C([-T,T]; H^1)$. Since $s<1$, the Sobolev embedding ensures that $w\in C([-T,T]; H^s)$ and also shows that $w$ is a solution of the Cauchy problem~\eqref{eq:CP_aux} in $H^s$.

From what have already been proven, we conclude that $\tilde\eta = v+w\in C([-T,T]; H^s)$ solves the Cauchy problem \eqref{eq:CP_global}. Indeed, we have that $\tilde\eta(0) = v_0 + w_0 = \eta_0$, and  combining \eqref{eq:CP_global_v} and \eqref{eq:CP_aux} yields
\[
\begin{aligned}
\tilde\eta_t &= v_t + w_t = A_j\left(v-\frac{3\alpha}{4} v^2\right) + A_j\left(w-\frac{3\alpha}{4}( 2 v w + w^2)\right) \\
& = A_j\left(v+w-\frac{3\alpha}{4}(v+w)^2\right)\\
& = A_j\left(\tilde\eta-\frac{3\alpha}{4}\tilde\eta^2\right).
\end{aligned}
\]
Hence $\tilde\eta\in C([-T,T]; H^s)$ is a solution of  problem~\eqref{eq:CP_global}. 

Finally, since $T$ is arbitrary the solution of  problem~\eqref{eq:CP_global} is globally defined and  the proof is complete.
\end{proof}

We proceed to prove Theorem~\ref{localw}. The basic ideas of the proofs of the first and second claims of this Theorem are analogous to those used in the proof of Theorems~\ref{teoremalocalsemigrupo} and~\ref{teoremaglobal1}, respectively. Therefore, we highlight only the main steps of the proof.

\begin{proof}{(of Theorem~\ref{localw})} In order to prove the existence of solutions, we consider the following integral form of equation~\eqref{cauchywlocal}
\begin{equation}
\label{eqintegralw}
w(t)=S_j(t)w_0-\frac{3\alpha}{4}   \int_0^t S_j (t-t')A_j\left((uw)(t') +w^2(t')\right)dt',
\end{equation}
and apply the Banach Fixed Point Theorem {again} in a complete metric space 
\begin{displaymath}
\Lambda=\Lambda(T,M)=\left\{v\in C\left([-T,T],H^r\right); \left\|v\right\|_{X_T^{r}}\leq M\right\},
\end{displaymath}
with metric $d(v_1, v_2) = \left\|v_1-v_2\right\|_{X_T^{r}}$, where $\left\|v\right\|_{X_T^{r}}= \sup_{t\in [-T,T]}\left\|v(t)\right\|_{r}$. The values of $M$ and $T$ will be chosen conveniently in the next steps.

Given the value of $T_0$ in the statement of this Theorem, we consider $T\leq T_0$ and the map $J$ on $\Lambda$  defined as
\begin{displaymath}
(Jv)(t) = S_j(t)w_0-\frac{3\alpha}{4}   \int_0^t S_j (t-t')A_j\left((uv)(t') +v^2(t')\right)dt',\quad t\in[-T, T].
\end{displaymath}
By Lemma~\ref{lema_w1} and the properties of $A_j$ and $S_j$, we have $(Jv)(t)\in H^r$ for each $ t\in[-T, T]$. Moreover, $J:\Lambda\to X_T^r$. Indeed, for $t_1,t_2\in[-T,T]$ we get
\begin{multline*}
(Jv)(t_2)-(Jv)(t_1) = \left(S_j(\Delta t)-I\right)S_j(t_1) w_0 \\
-\frac{3\alpha}{4}\left[\left(S_j(\Delta t)-I\right) \int_0^{t_1} S_j (t_1-t')A_j\left((uv)(t') +v^2(t')\right)dt' \right. \\
\left. +  \int_{t_1}^{t_2} S_j (t_2-t')A_j\left((uv)(t') +v^2(t')\right)dt'  \right],
\end{multline*}
where $\Delta t = t_2-t_1$. Combining the strong continuity of the group $S_j$, the boundedness of $u$ and $v$, and Lemma~\ref{lema_w1} we get the continuity of $Jv$.

Set $M=\max\left\{2 \left\|w_0\right\|_{r},\left\|u\right\|_{X_{T_0}^s}\right\}$ in the definition of the space $\Lambda=\Lambda(T,M)$. Then, from equation~\eqref{eqintegralw} and by Lemma~\ref{lema_w1} we get 
\begin{align*}
\|Jv\|_{X_T^r} & \leq \left\|w_0\right\|_{r} + \frac{3\alpha}{4}T  \left( C_{r,s}\|u\|_{X_{T_0}^s} +C_{r,r}\|v\|_{X_T^r} \right) \|v\|_{X_T^r} \\
& \leq \frac{M}{2}\left(1 + \frac{3\alpha}{2} \widetilde{C}_{r,s} T M\right),
\end{align*}
where $\widetilde{C}_{r,s} = C_{r,s} + C_{r,r}$. Thus, taking $T\leq T' = \min\left\{ T_0, \frac{2}{3\alpha \widetilde{C}_{r,s} M}  \right\}$ we have $J(\Lambda)\subset \Lambda$.

Take $v_1, v_2\in\Lambda$, then 
\begin{multline*}
(Jv_2)(t)-(Jv_1)(t) =  -\frac{3\alpha}{4} \int_0^{t} S_j (t-t')A_j \left[u(t') \left(v_2(t')- v_1(t')\right) \right.  \\
 + \left. \left(v_2(t') + v_1(t')\right) \left(v_2(t')- v_1(t')\right)  \right] dt',
\end{multline*}
and the inequalities from Lemma~\ref{lema_w1} yield 
\begin{align*}
d(Jv_1, Jv_2) &\leq \frac{3\alpha}{4} T \left( C_{r,s}\|u\|_{X_{T_0}^s} + 2C_{r,r} \max\{\|v_1\|_{X_T^r}, \|v_2\|_{X_T^r}\} \right) \|v_1 - v_2\|_{X_T^r}\\
& \leq \frac{3\alpha}{2} \widetilde{C}_{r,s} T M d(v_1, v_2).
\end{align*}
Hence, choosing $T < T'$ it follows that $J$ is a contraction map on $\Lambda$. We conclude that there exists a solution $w\in C([-T,T]; H^r)$ of the integral equation~\eqref{eqintegralw}.

We proceed to show that $w_t  = A_j\left( w -\tfrac{3\alpha}{4}(uw+w^2)\right)$ in $H^r$. Fix $t \in [-T,T]$ and take $h$ such that $t+h \in [-T,T]$, then
\begin{multline}\label{eq:lim-w_t}
\frac{w(t+h)-w(t)}{h} = \left(\frac{S_j(h)-I}{h}\right) w(t) \\
-\frac{3\alpha}{4h}  \int_{t}^{t+h} S_j (t+h-t') A_j\left((uw)(t') +w^2(t')\right) dt'.
\end{multline}
From the properties of $S_j$ we see that the  first term in the right-hand side of \eqref{eq:lim-w_t} converges to $A_j w(t)$ as $h\to 0$. Additionally, considering $h>0$ and using the Mean Value Theorem there exists $\theta_h\in (0,1)$ such that 
\begin{align*} 
&\left\|\frac{1}{h}   \int_t^{t+h}S_j(t+h-t' )A_j \left((uw)(t') +w^2(t')\right) dt'-A_j\left((uw)(t) +w^2(t)\right) \right\|_s \\
&\leq \frac{1}{h}   \int_t^{t+h} \left\|S_j(t+h-t')A_j  \left((uw)(t') +w^2(t')\right) - A_j  \left((uw)(t) +w^2(t)\right)\right\|_s dt'\\
&= \left\|S_j(h-\theta_h h)A_j \left((uw)(t+\theta_h h) +w^2(t+\theta_h h)\right) -A_j \left((uw)(t) +w^2(t)\right) \right\|_s,
\end{align*} 
which tends to zero as $h$ goes to zero. The same reasoning applies to the case $h<0$. Consequently, the  second term in the right-hand side of \eqref{eq:lim-w_t} converges to $-\frac{3\alpha}{4h}  A_j  \left((uw)(t) +w^2(t)\right) $ as $h\to 0$, which establishes the desired conclusion.

Our next goal is to prove uniqueness. Let $w_i\in C([-T_i,T_i]; H^r)$ ($i=1,2$) with $0<T_i\leq T_0$ be two solutions of problem~\eqref{cauchywlocal}. Without loss of generality, we assume that $T_1\leq T_2$. From equation~\eqref{eqintegralw},  for any $t\in [-T_1,T_1]$ we have
\begin{multline*}
w_2(t)-w_1(t) =  S_j(t)\left(w_2(0)-w_1(0)\right) -\frac{3\alpha}{4} \int_0^{t} S_j (t-t')A_j \left[u(t') \left(w_2(t')- w_1(t')\right) \right.  \\
 + \left. \left(w_2(t') + w_1(t')\right) \left(w_2(t')- w_1(t')\right)  \right] dt'.
\end{multline*}
Hence, using Lemma~\ref{lema_w1}, it follows that
\begin{displaymath}
\|w_2(t)-w_1(t)\|_r \leq  \|w_2(0)-w_1(0)\|_r + \frac{3\alpha}{2} \widetilde{C}_{r,s} K_0 \left| \int_0^{t} \|w_2(t')-w_1(t')\|_r dt' \right|,
\end{displaymath}
where $K_0 = \sup_{t'\in [-T_1,T_1]} \max\left\{\|u(t')\|_s, \|w_1(t')\|_r, \|w_2(t')\|_r \right\}$, and Gronwall inequality implies that $w_2(t)-w_1(t) = 0$ for any $t\in [-T_1,T_1]$. 

Now, we turn to the proof of the continuous dependence of solutions. Let $w^* \in C([-T^*,T^*], H^{r})$ be a solution with initial condition $w_0^*$, where $0< T^* \leq T_0$. Consider a sequence $\left(w_n^0\right)_{n\in \N} \subset H^{r}$ satisfying $w_n^0 \to w_0^*$ in $H^{r}$ and let $w_n\in C\left([-T_n,T_n], H^{r}\right)$ be a solution with initial condition $w_0^n$, where $0<  T_n\leq T_0$, for all $n \in \N$. 

Consider $0<T<T^*$ and set $\overline{T_n}=\min\left\{T_n, T\right\}$, $n \in \N$. Then, from Lemma~\ref{lema_w1}, for any $t\in [-\overline{T_n},\overline{T_n}]$ we get
\begin{align*}
\left\|w_n(t)-w^*(t)\right\|_{r}  & \leq \left\| w_0^n- w_0^*\right\|_{r} \\ 
& + \left|\int_0^{t}  \left[K_1\|w_n(t') - w^*(t')\|_r  + K_2 \|w_n(t') - w^*(t')\|_r^2   \right] dt' \right|,
\end{align*}
where $K_1 = \tfrac{3\alpha}{4}\left(C_{r,s} \sup_{t'\in[-T_0,T_0]} \|u(t')\|_s +2 C_{r,r} \sup_{t'\in[-T^*,T^*]} \|w^*(t')\|_r\right)$ and $K_2 = \tfrac{3\alpha}{4}C_{r,r} \sup_{t'\in[-T^*,T^*]} \|w^*(t')\|_r$. Then, applying a  comparison argument similar to that in the proof of Theorem~\ref{teoremalocalsemigrupo}, it follows that 
\begin{equation}
\left\|w_n(t)-w^*(t)\right\|_{r}\leq  \frac{K_1\left\|w_0^n-w_0^*\right\|_{r}e^{K_1\left|t\right|}}{K_1+K_2\left\|w_0^n-w_0^*\right\|_{r}\left(1-e^{K_1\left|t\right|}\right)},
\label{eqpsinw}
\end{equation}
for any $t\in  [-\overline{T_n},\overline{T_n}]\cap (-T_n^*, T_n^*)$, where $T_n^* = \frac{1}{K_1}\ln\left(\frac{K_1 +K_2 \left\| w_0^n- w_0^*\right\|_{r} }{K_2 \left\| w_0^n- w_0^*\right\|_{r}}\right)$. Since $w_0^n\to w_0^*$, we have $T_n^*\to\infty$  and \eqref{eqpsinw} is valid for $t\in  [-\overline{T_n},\overline{T_n}]$ when  $n$ is sufficiently large. Furthermore, this implies that $w_n(t)$ remains bounded for $t\in[-\overline{T_n},\overline{T_n}]$ and proceeding as in the proof of Lemma~\ref{lemaEP} we conclude that the solution can be extended to the interval $[-T, T]$. Then, from \eqref{eqpsinw} it is easy to check that
$$\lim_{n\to \infty}   \sup_{t\in [-T, T]} \left\|w_n(\cdot,t)-w^*(\cdot,t)\right\|_{r}=0,$$
which concludes the proof of the continuous dependence.

Our next goal is to establish that if $r\geq 1$ the solution is defined in the whole interval $[-T_0, T_0]$. Consider a solution  $w\in C([-T_*, T_*], H^r)$ of problem \eqref{cauchywlocal} with $T_* <T_0$. Proceeding as in the proof of Lemma~\ref{lema4}, it follows that 
\begin{displaymath}
\interleave w(t) \interleave_1^2 = \interleave w(0) \interleave_1^{2} - \frac{3\alpha}{2}\int_0^t ( u(t'), w(t')\partial_x w(t') )_0 dt',
\end{displaymath}
then
\begin{align*}
\interleave w(t) \interleave_1^2 &\leq \interleave w_0 \interleave_1^2 +  \frac{3\alpha}{2}\left |\int_0^t \| u(t')\|_0 \|w(t')\|_\infty \|\partial_x w(t')\|_0  dt' \right | \\
&\leq \interleave w_0 \interleave_1^2 + K_3 \left |\int_0^t \|w(t')\|_1^2  dt' \right |,
\end{align*}
where $K_3 =  \tfrac{3\alpha}{2} C_{S} \sup_{t'\in[-T_0,T_0]} \|u(t')\|_s$ with the constant $C_{S}$ appearing in Sobolev Lemma (see equation~\eqref{eq:ineq_inf}).  As a consequence, applying Gronwall Lemma
\begin{displaymath}
\|w(t)\|_1 \leq K_4
\end{displaymath}
for any $t\in [-T_*, T_*]$, where $K_4$ is a constant independent of $T_*$.  From the integral formulation \eqref{eqintegralw} we have for any $t\in [-T_*, T_*]$
\begin{displaymath}
\|w(t)\|_r \leq \|w_0\|_r + K_5 \left| \int_0^t \|w(t')\|_r dt'  \right|,
\end{displaymath}
where
\begin{displaymath}
K_5 = \tfrac{3\alpha}{4} \left(C_{r,s}\sup_{t'\in[-T_0,T_0]} \|u(t')\|_s + \tfrac{1}{a+\sqrt{b}} C_{S} K_4 \right).
\end{displaymath}
Applying Gronwall Lemma, it follows that  $\|w(t)\|_r \leq \|w_0\|_r e^{K_5 T_0}$, for any $t\in [-T_*, T_*]$.  Hence, using an argument similar to that in the proof of Lemma~\ref{lemaEP}, this local solution can always be extended to a larger interval unless $T_* = T_0$. As a consequence the maximal solution is defined on the  interval  $[-T_0, T_0]$.
\end{proof}


\end{document}